\numberwithin{equation}{section}
\numberwithin{figure}{section}
\newtheorem{theorem}{Theorem}[section]
\newtheorem{remark}{Remark}[section]
\newtheorem{lemma}{Lemma}[section]
\author[Y.-Z. Chen]{Yazhou Chen}
\address{ College of Mathematics and Physics, Beijing University of
Chemical Technology, Beijing 100029, China}
\email{chenyz@mail.buct.edu.cn}
\author[Q.-L. He]{Qiaolin He}
\address{School of Mathematics, Sichuan University, Sichuan 610065, China}
\email{qlhejenny@scu.edu.cn}
\author[B. Huang]{Bin Huang}
\address{College of Mathematics and Physics, Beijing University of
Chemical Technology, Beijing 100029, China}
\email{abinhuang@gmail.com}
\author[X.-D. Shi]{Xiaoding Shi}
\address{College of Mathematics and Physics, Beijing University of
Chemical Technology, Beijing 100029, China}
\email{shixd@mail.buct.edu.cn}
\title[Viscous Compressible Heat-conducting Navier-Stokes-Allen-Cahn System]
{The Cauchy Problem for Non-Isentropic Compressible Navier-Stokes/Allen-Cahn system with Degenerate Heat-Conductivity}
\keywords{Navier-Stokes equations, compressible immiscible two-phase flow, Allen-Cahn equation, global strong solution, Cauchy problem}
\subjclass[2010]{35Q30, 76T30, 35C20}
\date{\today}
\begin{document}
\begin{abstract}
The Cauchy problem for non-isentropic compressible Navier-Stokes/Allen-Cahn system with degenerate heat-conductivity $\kappa(\theta)=\tilde{\kappa}\theta^\beta$ in 1-d is discussed in this paper. This system is widely used to describe the motion of immiscible two-phase flow in numerical simulation. The wellposedness for strong solution of this problem is established with the $H^1$ initial data for density, temperature, velocity, and the $H^2$ initial data for phase field. The result shows that no discontinuity of the phase field, vacuum, shock wave, mass or heat concentration will be developed at any finite time in the whole space. From the hydrodynamic point of view, this means that no matter how complex the interaction between the hydrodynamic and phase-field effects, phase separation will not occur, but the phase transition is possible.
\end{abstract}

\maketitle
\indent
%Coolant in nuclear power systems is usually operated at high temperature and pressure, therefore, compressible two-phase flow is often encountered, for example, the saturated water-vapor systems, etc. The similar phenomenon occurs in the flow of multi-polymers in chemical industries.
In this paper, the motion of the viscous compressible immiscible two-phase flow is considered,  which is described by the following Navier-Stokes/Allen-Cahn system %which proposed by Heida-M$\mathrm{\acute{a}}$lek-Rajagopal  \cite{HMR2012}, Blesgen \cite{B1999}, etc.
%we study the Cauchy problem for compressible non-isentropic %This system is an important mathematical model for the  numerical simulation of, an important feature of this model is that the diffusion interface between phases represents the free boundary between phases, described as following:
\begin{equation}\label{original NSAC}
\left\{\begin{array}{llll}
\displaystyle \rho_{t}+\textrm{div}(\rho \mathbf{u})=0,\\
\displaystyle (\rho \mathbf{u})_{t}+\mathrm{div}\big(\rho \mathbf{u}\otimes \mathbf{u}\big)=\mathrm{div}\mathbb{T},
  \\
\displaystyle(\rho\phi)_{t}+\mathrm{div}\big(\rho\phi \mathbf{u}\big)=-\mu,\\
\displaystyle\rho\mu=\rho\frac{\partial f}{\partial \phi}-\mathrm{div}\big(\rho\frac{\partial f}{\partial \nabla\phi}\big),\\
\displaystyle(\rho E)_{t}+\mathrm{div}(\rho E\mathbf{u})=\mathrm{div}\big(\mathbb{T}\mathbf{u}+\kappa(\theta)\nabla \theta-\mu\frac{\partial f}{\partial \nabla\phi}\big),
\end{array}\right.
\end{equation}
where $ \mathbf{x}\in \mathbb{R}^N $ is the spatial variable, $t$  the time, $N$   the spatial dimension. The unknown functions $\rho(\mathbf{x},t)$, $\mathbf{u}(\mathbf{x},t)$,  $\theta(\mathbf{x},t)$, $\phi(\mathbf{x},t)$  denote the total density, the velocity,  the absolute temperature and the phase field of the immiscible two-phase flow respectively. $\mu(\mathbf{x},t)$ is the chemical potential, $f$   the  phase-phase interfacial free energy density,  here we consider its common form as following (see Heida-M$\mathrm{\acute{a}}$lek-Rajagopal \cite{HMR2012} and the references therein) %Chen-He-Huang-Shi \cite{CHHS2022}
 \begin{equation}\label{free energy density}
 f(\rho,\phi,\nabla\phi)\overset{\text{def}}{=}\frac{1}{4\epsilon}(1-\phi^2)^2+\frac{\epsilon}{2\rho}|\nabla \phi|^2,
\end{equation}
where $\epsilon>0$  the thickness of the interface between the phases.
%$\kappa$ the coefficient of heat conduction,
The Cauchy stress-tensor $\mathbb{T}$ is represented by
\begin{equation}\label{T}
\mathbb{T}=2\nu\mathbb{D}(\mathbf{u})+\lambda(\mathrm{div}\mathbf{u})\mathbb{I}-p\mathbb{I}-\rho\nabla\phi\otimes\frac{\partial f}{\partial\nabla\phi}.
\end{equation}
where  $\mathbb{I}$ is the unit matrix, $\mathbb{D}\mathbf{u}$ is the so-called deformation tensor
\begin{equation}\label{Du}
  \mathbb{D}\mathbf{u}=\frac{1}{2}\big(\nabla \mathbf{u} +\nabla^{\top} \mathbf{u}\big),
\end{equation}
here the superscript $\top$  denotes the transpose and all vectors are column ones. The positive constants $\nu>0,\lambda>0$ are  viscosity coefficients, satisfying
\begin{equation}\label{nu}
 \nu>0,\ \ \lambda+\frac{2}{N}\nu\geq0.
\end{equation}
The  total energy density $\rho E$ is given by
\begin{equation}\label{total energy density}
 \rho E=\rho e+\rho f+\frac{1}{2}\rho\mathbf{u}^2,
\end{equation}
where $\rho e$ is the internal energy, $\frac{\rho\mathbf{u}^2}{2}$  the kinetic energy. The pressure
 $p$, entropy $s$ and $\rho$, $e$, $f$ obey the  second law of thermodynamics
\begin{equation}\label{second law of thermodynamics}
  ds=\frac{1}{\theta}\big(d(e+f)+pd(\frac{1}{\rho})\big),
\end{equation}
 which implies that
\begin{equation}\label{pressure}
p=\rho^2\frac{\partial (e+f)}{\partial \rho}+\theta\frac{\partial p}{\partial \theta}=\rho^2\frac{\partial e(\rho,\theta)}{\partial \rho}-\frac\epsilon2|\nabla\phi|^2+\theta\frac{\partial p}{\partial \theta}.
\end{equation}
Throughout this paper, we consider the ideal polytropic gas, that is,
\begin{equation}\label{pressure p}
p(\rho,\theta)=R\rho\theta-\frac\epsilon2|\nabla\phi|^2,
\end{equation}
and
 $e$ satisfies
\begin{equation}\label{e}
  e=c_v\theta+\mathrm{constant},
\end{equation}
where $c_v$ is the specific heat capacity. $\kappa(\theta)$ is the heat conductivity satisfying
\begin{equation}\label{kappa}
 \kappa(\theta)=\kappa_0\theta^\beta,
\end{equation}
with positive constants $\kappa_0>0$ and $\beta>0$.

Substituting \eqref{free energy density}, \eqref{T}, \eqref{total energy density}, \eqref{pressure p} and \eqref{e} into \eqref{original NSAC}, then \eqref{original NSAC} is simplified as
\begin{equation}\label{NSFAC3d}
\left\{\begin{array}{llll}
\displaystyle \rho_{t}+\textrm{div}(\rho \mathbf{u})=0,\\
\displaystyle \rho\mathbf{u}_{t}+\rho(\mathbf{u}\cdot\nabla)\mathbf{u}-2\nu\mathrm{div} \mathbb{D}\mathbf{u}-\lambda\nabla\mathrm{div}\mathbf{u}=-\mathrm{div}\big(\epsilon\nabla\phi\otimes\nabla\phi-\frac{\epsilon}{2}|\nabla\phi|^2+\theta\frac{\partial p}{\partial \theta}\big), \\
\displaystyle \rho\phi_{t}+\rho \mathbf{u}\cdot\nabla\phi=-\mu,\\
\displaystyle \rho\mu=\frac{\rho}{\epsilon}(\phi^3-\phi)-\epsilon \Delta\phi,\\
\displaystyle c_v\big(\rho\theta_{t}+\rho \mathbf{u}\cdot\nabla\theta\big)+\theta p_\theta\mathrm{div}\mathbf{u}-\mathrm{div}(\kappa(\theta)\nabla \theta)=2\nu|\mathbb{D}\mathbf{u}|^2+\lambda(\mathrm{div}\mathbf{u})^2+\mu^2.
\end{array}\right.
\end{equation}

In this paper, %order to study the wellposedness of Cauchy problems  \eqref{NSFAC3d},
we study the well-posedness for the one-dimensional case of system \eqref{NSFAC3d}. Under the Lagrange coordinate transformation,  
%\begin{equation}\label{Lagrange coordinate}
% x=\int_0^{\tilde{x}}\rho(\xi,t)d\xi,\quad t=t,
%\end{equation}
 the Cauchy problem for system \eqref{NSFAC3d} in 1-D can be rewritten as
\begin{equation}\label{NSFAC-Lagrange}
\left\{\begin{array}{llll}
\displaystyle v_t-u_x=0,\\
\displaystyle u_t+(\frac{\theta}{v})_x=( \frac{u_{x}}{v})_x-\frac\epsilon2(\frac{\phi_x^2}{v^2})_x, \\
\displaystyle\theta_t+\frac{\theta}{v}u_x-(\frac{\theta^\beta\theta_x}{v})_x=\frac{u_x^2}{v}+v\mu^2,\\
\displaystyle \phi_t=-v\mu,\\
\displaystyle\mu=\frac{1}{\epsilon}(\phi^3-\phi)-\epsilon(\frac{\phi_x}{v})_x,
\end{array}\right.
\end{equation}
with initial condition
\begin{equation}\label{initial condition}
 (v,u,\theta,\phi)(x,0)=(v_0,u_0,\theta_0,\phi_0)(x),\ \ (v_0,u_0,\theta_0,|\phi_0|)(x)\xrightarrow{x\rightarrow\pm\infty}(1,0,1,1),
\end{equation}
where $v=\frac{1}{\rho}$, and without loss of generality,  we still use the notation of original coordinate system $(x,t)$,
%\begin{equation}\label{v0}
%,\quad \nu=2\nu+\lambda,
%\end{equation}
and we assume that $2\nu+\lambda=R=c_v=\kappa_0=1$. 
 
 Before giving the main theorem, we will briefly review the relevant work of \eqref{NSFAC-Lagrange} that has been done.
   Kawohl \cite{K1985}, Jiang \cite{J1994-1,J1994-2}  and Wang \cite{W2003} established the
global existence of smooth solutions for the compressible non-isentropic Navier-Stokes equations in 1-d bounded domain under the assumption $\kappa_0(1+\theta^q)\leq \kappa(\theta)\leq \kappa_1(1+\theta^q)$,  the methods they used  rely heavily on the non-degeneracy of the heat conductivity $\kappa(\theta)$. Under the degenerate and nonlinear case \eqref{kappa}, Jenssen-Karper \cite{JK2010}  proved the global existence of  weak solutions for non-slip and heat insulated boundary conditions when $\beta\in(0,3/2)$, and Pan-Zhang \cite{PZ2015} obtain the global strong solutions for more general  $\beta\in(0,\infty)$. Further,  Duan-Guo-Zhu \cite{DGZ2017} generalized Pan-Zhang's work to  stress-free and heat insulated boundary condition.  It should be pointed out that, the results of \cite{PZ2015} and \cite{DGZ2017} all require that the initial conditions  satisfies $( v_0,u_0,\theta_0)\in H^1\times H^2\times H^2$. Recently,  Huang-Shi-Sun \cite{HSS2020}, Huang-Shi \cite{HS2020} proves that the same results of \cite{PZ2015} and \cite{DGZ2017}  can be obtained for initial conditions as long as $( v_0,u_0,\theta_0)\in H^1$ is satisfied, and the large time behavior of the global strong solutions is also obtained.

For the non-isentropic compressible Navier-Stokes/Allen-Cahn system \eqref{original NSAC},  Kotschote \cite{K2012} obtained the existence and uniqueness of local strong solutions in bounded domain recently. % The initial boundary problem \eqref{NSFAC-Lagrange} is discussed by Chen-He-Huang-Shi \cite{CHHS2020}, they proposed the existence and uniqueness of the global strong solution for the initial boundary problem.
 For about the isentropic Navier-Stokes/Allen-Cahn problem, Feireisl-Petzeltov$\mathrm{\acute{a}}$-Rocca-Schimperna \cite{FPRS2010} presented the global existence of a weak solution for the adiabatic exponent of pressure $\gamma>6$, and their result was improved to $\gamma>2$ by Chen-Wen-Zhu \cite{CWZ-19}.  Ding-Li-Lou \cite{DLL2013} established the existence and uniqueness of global strong solution  for initial boundary problem in 1-D.  Chen-Guo \cite{CG2017} generalized the result of \cite{DLL2013}  if the initial value contains  vacuum. Chen-He-Huang-Shi \cite{CHHS2021} discussed the global existence and uniqueness of strong solutions for this system in bounded domain with large perturbation of the initial conditions. Moreover, in order to solve the problem of moving contact lines on the solid boundary,  the generalized Navier boundary condition  and the relaxation boundary condition are established by Chen-He-Huang-Shi \cite{CHHS2022}, and the  existence and uniqueness for local strong solution in three dimensional bounded domain is obtained.

%It should be noted that all known works about \eqref{original NSAC} are limited to
%are concentrated on  isentropic problems over bounded regions, and 
%the coefficient of heat conduction is assumed constant %$(\beta=0$ in \eqref{kappa}). 
 
 \noindent\textbf{\normalsize Notations.} Throughout our paper, we denote $L^2(\mathbb{R})$ ($L^2$ without any ambiguity)  as the space of square integrable real valued function defined on $\mathbb{R}$, with the norm $\|f\|=(\int_{\mathbb{R}}|f|^2dx)^{\frac{1}{2}}$,
 and $H^l(\mathbb{R})$ ($l>0, H^l$ without any ambiguity)  the Sobolev space of $L^2$-functions $f$ on $\mathbb{R}$ whose derivatives $\partial^j_x f,  j=1,\cdots,l$ are also square integrable functions, with the norm $ \|f\|_{l}=(\sum_{j=0}^l\|{\partial^j_x f}\|^2)^{\frac{1}{2}}$. 
 
 The purpose of this paper is to gain an in-depth understanding of phase transition and phase separation phenomena for immiscible two-phase flow. We will study the existence and uniqueness of  global strong solutions for Cauchy problem of \eqref{NSFAC-Lagrange}-\eqref{initial condition} for $\beta > 0$.
The following Theorem is our main result:

\begin{theorem}\label{thm-global}
 Assume that \eqref{kappa}, and
\begin{equation}\label{condition 1}
  (v_0-1,u_0,\theta_0-1)\in H^1(\mathbb{R}),\ \ \phi_0^2-1\in L^2(\mathbb{R}),\ \ \phi_{0x}\in H^1(\mathbb{R}),
\end{equation}
and
\begin{equation}\label{condition 2}
  \inf_{x\in\mathbb{R}}v_0(x)>0,\ \ \inf_{x\in\mathbb{R}}\theta_0(x)>0,\ \ \phi_0(x)\in[-1,1].
\end{equation}
Then,  the Cauchy problem \eqref{NSFAC-Lagrange}-\eqref{initial condition} has a unique strong solution $(v,u,\theta,\phi)$ such that for fixed $T>0$, satisfying
\begin{equation}
\left\{\begin{array}{llll}
 \displaystyle v-1,u,\theta-1\in L^\infty(0,T;H^1(\mathbb{R})),\phi^2-1\in L^\infty(0,T;L^2(\mathbb{R})),\\
 \displaystyle\phi_x\in L^\infty(0,T;H^1(\mathbb{R})),\quad(\phi^2-1)_x\in L^2(0,T;L^2(\mathbb{R})),\\
 %\displaystyle  v_t\in L^\infty(0,T;L^2(\mathbb{R}))\cap L^2(0,T;H^1(\mathbb{R})), \\
 %\displaystyle \phi_t\in L^\infty(0,T;L^2(\mathbb{R}))\cap L^2(0,T;H^1(\mathbb{R})), \\
\displaystyle v_x\in L^2(0,T;L^2(\mathbb{R})),u_{x},\theta_{x}\in L^2(0,T;H^1(\mathbb{R})),\phi_{xx}\in L^2(0,T;H^1(\mathbb{R})),
\end{array}\right.
\end{equation}
Moreover, there exists a positive constant $C$ depending on the initial data and $T$, satisfying
\begin{equation}\label{upper and lower bound}
C^{-1}\leq v(x,t)\leq C,\ \ C^{-1}\leq \theta(x,t)\leq C,\ \ \phi(x,t)\in[-1,1],\quad (x,t)\in\mathbb{R}\times[0,T].
\end{equation}
\end{theorem}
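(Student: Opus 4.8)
The plan is the classical one for global well-posedness of $1$-D compressible flows: (i) prove local-in-time existence and uniqueness of a strong solution with the regularity asserted in Theorem~\ref{thm-global}; (ii) establish a priori estimates on $[0,T]$ that depend only on the initial data and on $T$ (not on the length of the local interval); (iii) glue these together by a continuation argument to obtain a solution on all of $[0,T]$; and (iv) prove uniqueness by an energy estimate for the difference of two solutions. Step (i) I would carry out by a standard linearization plus contraction-mapping argument (the phase-field equation being semilinear parabolic once $v$ is frozen, as in Kotschote~\cite{K2012}), using \eqref{condition 1}--\eqref{condition 2}, in particular $\inf v_0>0$ and $\inf\theta_0>0$, to keep the coefficients $1/v$ and $\theta^\beta/v$ non-degenerate on a short time interval.

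\textbf{Zeroth-order estimates.} For step (ii) I would proceed through a fixed order of estimates. First, a maximum-principle argument for $\phi_t=-\frac{v}{\epsilon}(\phi^3-\phi)+\epsilon v\big(\frac{\phi_x}{v}\big)_x$ keeps $\phi(x,t)\in[-1,1]$, which in turn bounds the double-well potential $(1-\phi^2)^2$ pointwise. Next, the basic energy identity --- obtained by multiplying the momentum equation by $u$, the $\phi$-equation by $\mu$, integrating the temperature equation, and adding --- exhibits the conserved total energy $\int\big(\frac{u^2}{2}+(\theta-1)+\frac1{4\epsilon}(1-\phi^2)^2+\frac{\epsilon}{2v}\phi_x^2\big)\,dx$, thanks to the thermodynamic consistency of the model (the phase-field work terms cancel against the dissipation $v\mu^2$ and the stress term $\frac{\epsilon}{2}\frac{u_x\phi_x^2}{v^2}$). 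Combining this with an entropy inequality --- dividing the temperature equation by $\theta$ --- produces a genuine Lyapunov functional and yields the zeroth-order bounds: $u$ and $\phi_x$ in $L^\infty(0,T;L^2)$, $(1-\phi^2)^2$ in $L^\infty(0,T;L^1)$, and $\int_0^T\!\!\int\big(\frac{u_x^2}{v}+v\mu^2+\frac{\theta^{\beta-2}\theta_x^2}{v}\big)\,dx\,dt\le C$.

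\textbf{Pointwise bounds and higher regularity.} With these in hand I would next obtain $C^{-1}\le v\le C$ by a Kazhikhov--Jiang representation formula for $v$ in terms of the effective viscous flux $\frac{u_x}{v}-\frac{\theta}{v}-\frac{\epsilon}{2}\frac{\phi_x^2}{v^2}$ (the new phase-field stress, being nonnegative and controlled by the $\phi_x$ bound, only helps for the lower bound and is absorbed by the energy bound for the upper bound), and then pointwise lower and upper bounds on $\theta$. After that come the first-order estimates: $v_x$, $u_x$, $\theta_x$ in $L^\infty(0,T;L^2)$ and $u_{xx}$, $\theta_{xx}$ in $L^2(0,T;L^2)$, obtained by differentiating the equations of \eqref{NSFAC-Lagrange} and using the pointwise bounds; and estimates on $\phi_{xx}$, $\phi_{xxx}$, obtained by reading $\mu=\frac1\epsilon(\phi^3-\phi)-\epsilon\big(\frac{\phi_x}{v}\big)_x$ as an elliptic identity for $\phi_x/v$ once $\mu$ and $v$ are under control, and then feeding these bounds back into the temperature and velocity estimates. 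Once all bounds are in place the continuation argument gives a solution on $[0,T]$ with the stated regularity and \eqref{upper and lower bound}; uniqueness follows by writing the system for the difference of two solutions with the same data, estimating it in $L^2$ with the weight $1/v$, and closing by Gronwall.

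\textbf{Main obstacle.} The delicate point is the pointwise upper bound on $\theta$ (and its coupling with the upper bound on $v$), because the heat conductivity $\theta^\beta$ is degenerate and nonlinear for $\beta>0$, so the arguments available for constant conductivity do not apply. Here I would adapt the technique of Pan--Zhang~\cite{PZ2015} and Huang--Shi--Sun~\cite{HSS2020}, \cite{HS2020}: multiply the temperature equation by a suitable power of $\theta$, derive a differential inequality of the form $\frac{d}{dt}\int F(\theta)\,dx+\int(\cdots)\le C\big(1+\max_x\theta\big)^{a}(\cdots)$, and close it using the entropy dissipation and Gronwall. The extra difficulty compared with the pure Navier--Stokes case is the phase-field source $v\mu^2$ in the temperature equation together with the stress $\frac{\epsilon}{2}\big(\frac{\phi_x^2}{v^2}\big)_x$ in the momentum equation; controlling these requires the higher-order bounds on $\phi$ noted above (in particular $\phi_x\in L^2(0,T;L^\infty)$ via $\phi_{xx}\in L^2(0,T;L^2)$), so that the temperature, specific-volume and phase-field estimates must be run together in a bootstrap rather than sequentially.
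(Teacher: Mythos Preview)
Your overall roadmap---local existence, basic energy/entropy identity, Kazhikhov--Jiang representation for $v$, then a cascade of higher-order estimates with the $\theta$ upper bound as the delicate endpoint---matches the paper's. The place where your sketch has a real gap is the \emph{upper} bound on $v$. You write that the phase-field stress $\frac{\epsilon}{2}\frac{\phi_x^2}{v^2}$ is ``absorbed by the energy bound for the upper bound,'' but this is not so: the representation formula reads
\[
v(x,t)=D(x,t)Y(t)+\int_0^t\frac{D(x,t)Y(t)}{D(x,\tau)Y(\tau)}\Big(\theta(x,\tau)+\tfrac{\epsilon}{2}\tfrac{\phi_x^2(x,\tau)}{v(x,\tau)}\Big)\,d\tau,
\]
so an upper bound on $v$ requires $\int_0^T\sup_x\frac{\phi_x^2}{v}\,dt<\infty$, whereas the basic energy only gives $\sup_t\int\frac{\phi_x^2}{v}\,dx<\infty$. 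The missing idea (supplied in the paper) is to read the chemical-potential relation $\epsilon\big(\frac{\phi_x}{v}\big)_x=-\mu+\frac1\epsilon(\phi^3-\phi)$ as a pointwise control: since the entropy dissipation already contains $\int\frac{v\mu^2}{\theta}\,dx$ and $|\phi|\le C$ follows from the energy on unit intervals, one obtains
\[
\sup_{x\in\mathbb{R}}\Big(\tfrac{\phi_x}{v}\Big)^2(\cdot,t)\le C\Big(\sup_{x\in\mathbb{R}}(\theta-1)(\cdot,t)+1+V(t)\Big),
\]
and together with a localized (cutoff-weighted, since we are on $\mathbb{R}$) bound $\int_0^T\sup_x(\theta-1)\,dt\le C$ this closes the Gronwall for $v$ \emph{before} any $H^1$ estimates on $\phi$ or $v$ are available.

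This also affects your ordering of the $\theta$ upper bound. You place it right after the $v$ bounds and before the first-order estimates; in the paper it comes \emph{last}, after $v_x$, $u_x$, $u_{xx}$ and the full $H^2$ control of $\phi$ are in hand, by testing the temperature equation with $\theta^\beta\theta_t$ and using the already-established bounds $\int_0^T\sup_x u_x^4\,dt\le C$ and $\int_0^T\sup_x\mu^4\,dt\le C$ to handle the sources $u_x^2/v$ and $v\mu^2$. Your closing remark that $\theta$, $v$ and $\phi$ ``must be run together in a bootstrap'' is therefore misleading: once the $v$ upper bound is secured via the inequality above, the argument is genuinely sequential ($v_x\to\phi_{xx}\to\phi_{xxx}\to u_{xx}\to\theta_{xx}$), and no simultaneous bootstrap is needed.
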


%\begin{remark} Our Theorem 1.1 can be regarded as a natural generalization of the classical results for compressible Navier-Stokes system (\cite{K1981}) to Navier-Stokes/Allen-Cahn system \eqref{NSFAC-Lagrange}-\eqref{initial condition}.
%\end{remark}
\begin{remark} The system \eqref{original NSAC} is proposed for the analysis and numerical simulation of immiscible two-phase flow by Heida-M$\mathrm{\acute{a}}$lek-Rajagopal  \cite{HMR2012}, Blesgen \cite{B1999}, etc. An important feature of this model is that the sharp interface between the two phases is replaced by a diffusion interface. \end{remark}

\begin{remark}The assumption \eqref{kappa} is  based on the following reason: the heat conductivity $\kappa(\theta)$  of compressible  immiscible two-phase flow vary with temperature  under very high temperature and density environment. Strictly speaking, the Chapman-Enskog expansion for the first order approximation tells us, the coefficient of heat conduction depends on temperature (see Chapman-Colwing \cite{CC1994}). \end{remark}

 \begin{remark} What we want to point out here is that, $\eqref{initial condition}$ implies that initial  concentration difference $\phi_0$ of  the immiscible two-phase flow is $1$ or $-1$ at far fields. This initial condition can be used to explain phase separation and phase transition.
  \end{remark}

  \begin{remark} Theorem 1.1   shows that no discontinuity of the phase field, vacuum,  shock
wave, mass or heat concentration will be developed in finite time as the initial data $(v_0,u_0$, $\theta_0,|\phi_0|)(x)$ $\xrightarrow{x\rightarrow\pm\infty}(1,0,1,1)$. Which means that no matter how complex the interaction between the hydrodynamic and phase-field effects, as well as the  motion of the compressible two-phase immiscible flow has large oscillations, there is no separation of the phase field in the finite time.
 %Witterstein \cite{W-2010}
  \end{remark}

Now we  briefly describe some key points of proof for Theorem \ref{thm-global}.  The most important of the proof is to get the  positive upper bound and the lower bound of $v,\theta$ and $\phi$. Otherwise the system \eqref{NSFAC-Lagrange} will degenerate. %(see \eqref{the upper bound estimate for temperature}) (see \eqref{bound of density}, \eqref{bound of phi}, Different from the initial boundary value problem,  the Poincar$\mathrm{\acute{e}}$ inequality cannot be used to the Cauchy problem directly.
For this purpose, firstly, inspired by the idea of Kazhikhov \cite{K1981} and Jiang \cite{J1999},  we obtain a key expression of $v$ (see \eqref{expression of v}). Secondly, using the expression of $v$, combining with the basic energy estimate \eqref{Fundamental energy inequality}, the truncation function method(see \eqref{the truncation function method}), and the convexity of $y-\ln y-1$, we get the lower bound of $v$ and $\theta$ (see \eqref{bound of density}).
Further, after observing the key inequality of $\sup_{x\in\mathbb{R}}\big(\frac{\phi_x}{v}\big)^2(x,t)$ (see \eqref{The square modulus of the first derivative for phi})
 the  upper bound of $v$ can be derived. Finally, with the help of the key inequality of $\int_0^T\sup_{x\in\mathbb{R}}(\theta-1)^{2}dt$ (see \eqref{max theta^2}),
the higher order energy estimates for $\phi$ and $v$ can be achieved through the tedious energy estimates, (see \eqref{bound of the square modulus of the first derivative}, \eqref{bound of the square modulus of the third derivative for phi}). In particular,  the upper bound of $\|\theta_x\|_{L^\infty(0,T;L^2)}$ is derived (see \eqref{energy estimate of tempeture}), and thus the upper bound of temperature $\theta$ is achieved. The whole procedure of the proof will be carried out in the next section.

\section{The Proof of Theorem}
 The local existence and uniqueness for strong solutions of \eqref{NSFAC-Lagrange}-\eqref{initial condition}  is presented as following which can be proved  by the fixed point method, the  details are omitted here.
\begin{lemma}\label{Local existence}
Let \eqref{condition 1} and \eqref{condition 2}  hold, then there exists some $T_*>0$ such that, the Cauchy problem \eqref{NSFAC-Lagrange}-\eqref{initial condition} has a unique strong solution $(v,u,\theta,\phi)$  satisfying
\begin{equation}\label{local solution space}
\left\{\begin{array}{llll}
 \displaystyle v-1,u,\theta-1\in L^\infty(0,T^*;H^1(\mathbb{R})),\phi^2-1\in L^\infty(0,T^*;L^2(\mathbb{R})),\\
 \displaystyle\phi_x\in L^\infty(0,T^*;H^1(\mathbb{R})),\quad(\phi^2-1)_x\in L^2(0,T^*;L^2(\mathbb{R})),\\
 %\displaystyle  v_t\in L^\infty(0,T;L^2(\mathbb{R}))\cap L^2(0,T;H^1(\mathbb{R})), \\
 %\displaystyle \phi_t\in L^\infty(0,T;L^2(\mathbb{R}))\cap L^2(0,T;H^1(\mathbb{R})), \\
\displaystyle v_x\in L^2(0,T^*;L^2(\mathbb{R})),u_{x},\theta_{x}\in L^2(0,T^*;H^1(\mathbb{R})),\phi_{xx}\in L^2(0,T^*;H^1(\mathbb{R})),
\end{array}\right.\end{equation}
\end{lemma}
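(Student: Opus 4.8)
\emph{Strategy.} I would prove this by a Picard iteration that exploits the parabolic structure of the $u$-, $\theta$- and $\phi$-equations and the fact that $v$ is merely transported. Since $H^1(\mathbb{R})\hookrightarrow C(\mathbb{R})\cap L^\infty(\mathbb{R})$, the hypotheses \eqref{condition 1}--\eqref{condition 2} yield $v_0,\theta_0\in C(\mathbb{R})$ together with uniform bounds $0<2\underline m\le v_0,\theta_0\le\overline M$ and $\phi_0\in[-1,1]$. Set $(v^0,u^0,\theta^0,\phi^0)=(v_0,u_0,\theta_0,\phi_0)$ and, given the $k$-th iterate, define $v^{k+1}(x,t)=v_0(x)+\int_0^t u^k_x(x,s)\,ds$ and then let $u^{k+1},\theta^{k+1},\phi^{k+1}$ solve the \emph{linear} parabolic problems obtained by freezing all but the principal unknown:
$u^{k+1}_t-\big(\tfrac{u^{k+1}_x}{v^{k+1}}\big)_x=-\big(\tfrac{\theta^k}{v^{k+1}}\big)_x-\tfrac{\epsilon}{2}\big(\tfrac{(\phi^k_x)^2}{(v^{k+1})^2}\big)_x$,
$\theta^{k+1}_t-\big(\tfrac{\max\{\theta^k,\underline m\}^\beta\,\theta^{k+1}_x}{v^{k+1}}\big)_x=-\tfrac{\theta^k}{v^{k+1}}u^k_x+\tfrac{(u^k_x)^2}{v^{k+1}}+v^{k+1}(\mu^k)^2$,
$\phi^{k+1}_t-\epsilon v^{k+1}\big(\tfrac{\phi^{k+1}_x}{v^{k+1}}\big)_x=-\tfrac{v^{k+1}}{\epsilon}\phi^{k+1}\big((\phi^k)^2-1\big)$,
where $\mu^k=\tfrac1\epsilon((\phi^k)^3-\phi^k)-\epsilon(\phi^k_x/v^{k+1})_x$, and the cutoff $\max\{\theta^k,\underline m\}^\beta$ keeps the principal part uniformly parabolic (it is removed a posteriori once a lower bound on $\theta$ is recovered). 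Each linear problem is solvable with the regularity of \eqref{local solution space} by standard parabolic theory on $\mathbb{R}$, the coefficients $v^{k+1}$ and $\max\{\theta^k,\underline m\}^\beta$ being continuous and comparable to constants.

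\emph{Uniform bounds on the iterates.} Next I would fix $M$ large (depending on the data) and $T_*$ small (depending on $M,\underline m,\overline M$) and prove by induction that, on $[0,T_*]$, the $k$-th iterate lies in the ball $\mathcal B_M$ of the space \eqref{local solution space} of radius $M$ and satisfies $\underline m\le v^k\le 2\overline M$, $\theta^k\ge\underline m e^{-Ct}$, $\phi^k\in[-1,1]$. The volume bound follows from $v^{k+1}-v_0=\int_0^t u^k_x$ and the one-dimensional interpolation $\|u^k_x\|_{L^\infty_x}\le C\|u^k_x\|_{L^2_x}^{1/2}\|u^k_{xx}\|_{L^2_x}^{1/2}$, giving $\|v^{k+1}-v_0\|_{L^\infty_{x,t}}\le CT_*^{3/4}M$; the lower bound on $\theta^{k+1}$ follows from the minimum principle for its linear equation, whose source is nonnegative apart from $-\theta^k u^k_x/v^{k+1}$, absorbed by a Gronwall factor; and $\phi^{k+1}\in[-1,1]$ follows from the maximum principle, since at an interior value $\phi^{k+1}>1$ one has $\phi^{k+1}_x=0$, $\phi^{k+1}_{xx}\le0$ and $-\tfrac{v^{k+1}}{\epsilon}\phi^{k+1}((\phi^k)^2-1)\le0$ when $(\phi^k)^2\le1$, together with the far-field condition $|\phi_0|\to1$ from \eqref{initial condition}. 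The lengthiest step is closing the $H^1$-bounds for $u,\theta$ and the $H^2$-bound for $\phi$: test each linear equation against the usual multipliers ($u^{k+1}$ and $-u^{k+1}_{xx}$; $\theta^{k+1}-1$ and $-\theta^{k+1}_{xx}$; $\phi^{k+1}-1$ and $-\phi^{k+1}_{xx}$, and then the equation for $w=\phi^{k+1}_x/v^{k+1}$ against $-w_{xx}$ to reach the top order while avoiding $v_{xx}$, which is unavailable since $v$ has no spatial smoothing), control the quadratic right-hand sides by the same interpolation inequality — for instance $\|(u^k_x)^2/v^{k+1}\|_{L^2_tL^2_x}\le C\,T_*^{1/4}\|u^k_x\|_{L^\infty_tL^2_x}^{3/2}\|u^k_{xx}\|_{L^2_tL^2_x}^{1/2}$ and $\|v^{k+1}(\mu^k)^2\|_{L^2_tL^2_x}\le C\|\mu^k\|_{L^\infty_tL^2_x}\|\mu^k\|_{L^2_tL^\infty_x}$ — and conclude by Gronwall that the $\mathcal B_M$-norm of the $(k+1)$-th iterate is at most $C_0(\mathrm{data})+C(M)T_*^{\alpha}$; choosing $M=2C_0$ and then $T_*$ small makes $\mathcal B_M$ invariant.

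\emph{Contraction, convergence, uniqueness, and the main obstacle.} Finally I would form the differences $\delta v^k=v^{k+1}-v^k$, $\delta u^k=u^{k+1}-u^k$, etc., subtract the corresponding linear equations, and estimate in the weaker space $\delta v\in L^\infty_tL^2$, $\delta u,\delta\theta\in L^\infty_tL^2\cap L^2_tH^1$, $\delta\phi\in L^\infty_tH^1\cap L^2_tH^2$; using the uniform bounds of the previous step to control all coefficients and all bilinear terms (here the one-dimensional embeddings $H^1\hookrightarrow L^\infty$ and $L^1\hookrightarrow H^{-1}$ dispose of the quadratic sources) one obtains, after shrinking $T_*$ once more, a contraction factor $\le\tfrac12$. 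Hence the iterates converge in this space; lower semicontinuity of the norms places the limit in $\mathcal B_M$, it satisfies the genuinely nonlinear system \eqref{NSFAC-Lagrange} (the bound $\theta\ge\underline m e^{-CT_*}>0$ removing the cutoff), and re-examining the equations with coefficients of the established regularity upgrades the solution to the class \eqref{local solution space}. Uniqueness is the same difference estimate applied to two solutions with the same data. The hard part is the interplay of the degenerate nonlinear conductivity $\theta^\beta$ with the low ($H^1$) regularity of $\theta$: one must keep $\theta$ uniformly bounded away from $0$ along the whole iteration (so the principal operator never degenerates and the cutoff is legitimately removable) while simultaneously absorbing the quadratic dissipation sources $u_x^2/v$ and $v\mu^2$ — the latter carrying two $x$-derivatives of $\phi$ — into the parabolic gain; this is precisely what forces the asymmetric regularity $\phi\in H^2$ versus $(v,u,\theta)\in H^1$ and makes the Gagliardo--Nirenberg/Young bookkeeping delicate, whereas the transport equation for $v$, the two maximum principles, and the contraction are routine.
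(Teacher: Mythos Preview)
The paper does not actually prove this lemma: it states that the result ``can be proved by the fixed point method, the details are omitted here'' and then moves directly to the a priori estimates. Your proposal is precisely a detailed implementation of that fixed point/Picard scheme, so in spirit it matches exactly what the authors have in mind; you are simply supplying the details they omit.

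One small point is worth flagging. Your maximum principle argument for $\phi^{k+1}\in[-1,1]$ does not work with the linearization you wrote. In your $\phi$-equation the zero-order term is $-\tfrac{v^{k+1}}{\epsilon}((\phi^k)^2-1)\,\phi^{k+1}$; under the inductive hypothesis $(\phi^k)^2\le 1$ the coefficient of $\phi^{k+1}$ is \emph{nonnegative}, so at an interior maximum with $\phi^{k+1}>1$ the right-hand side is $\ge 0$ and you obtain no contradiction. This is harmless for local existence: you do not need $|\phi^{k+1}|\le 1$ along the iteration, only a uniform $L^\infty$ bound (which follows from your $H^1$ control of $\phi^{k+1}_x$ together with the $L^2$ bound on $(\phi^{k+1})^2-1$, or directly by Gronwall on the linear zero-order term for short time). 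The invariant region $\phi\in[-1,1]$ is then recovered a posteriori for the genuine nonlinear solution, exactly as the paper does at the very end of Lemma~\ref{theta-x}. A similar remark applies to your lower bound for $\theta^{k+1}$: since the source carries $\theta^k$ rather than $\theta^{k+1}$, the clean exponential form $\underline m e^{-Ct}$ is not immediate, but the cruder bound $\theta^{k+1}\ge \underline m - C(M)\,t$ on $[0,T_*]$ is, and that suffices to remove the cutoff. With these two cosmetic adjustments the scheme closes as you describe.
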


With the existence of a local solution, Theorem 1.1 can be achieved by extending the local solutions globally in time from the following series of prior estimates.
Without loss of generality, in the following prior estimates, we assume that $\nu=R=c_v=\tilde{\kappa}=1$,    and
\begin{equation}\label{initial energy}
  \int_{-\infty}^{+\infty}(v_0-1)dx=1,\ \ \int_{-\infty}^{+\infty}\big(\frac {u_0^2}{2}+(\theta_0-1)+\frac{1}{4\epsilon}(\phi_0^2-1)^2+\frac{\epsilon}{2}\frac{\phi_{0x}^2}{v_0}\big)dx=1.
\end{equation}
From here to the end of this paper,  $C>0 $ denotes  the
generic positive constant  depending only on $\|v_0-1,u_0,\theta_0-1\|_{H^1(\mathbb{R})}$, $\|\phi_0^2-1\|_{L^2(\mathbb{R})}$, $\|\phi_{0x}\|_{H^1(\mathbb{R})}$, $\inf\limits_{x\in \mathbb{R}}v_0(x)$, and $ \inf\limits_{x\in \mathbb{R}}\theta_0(x)$.

\begin{lemma}\label{Fundamental energy inequality}
Let $(v,u,\theta,\phi)$ be a smooth solution of \eqref{NSFAC-Lagrange}-\eqref{initial condition} on $(-\infty,+\infty)\times [0,T]$. Then it holds
\begin{eqnarray}\label{basic energy inequality}
 && \sup_{0\leq t\leq T}\int_{-\infty}^{+\infty}\big(\frac{u^2}{2}+\frac{1}{4\epsilon}(\phi^2-1)^2+\frac{\epsilon}{2}\frac{\phi_x^2}{v}+(v-\ln v-1)+(\theta-\ln \theta-1)\big)dx\notag\\
 &&\qquad +\int_0^TV(t)dxdt\leq E_{0},
\end{eqnarray}
where
\begin{equation}\label{E0}
 E_0\overset{\mathrm{def}}{=}\int_{-\infty}^{+\infty}\big(\frac{u_0^2}{2}+\frac{1}{4\epsilon}(\phi_0^2-1)^2+\frac{\epsilon}{2}\frac{\phi_{0x}^2}{v}+(v_0-\ln v_0-1)+(\theta_0-\ln \theta_0-1)\big)dx,
\end{equation}
and
\begin{equation}\label{V(t)}
 V(t)=\int_{-\infty}^{+\infty}\big(\frac{\theta^\beta\theta_x^2}{v\theta^2}+\frac{u_x^2}{v\theta}+\frac{v\mu^2}{\theta}\big)dx.
\end{equation}
\end{lemma}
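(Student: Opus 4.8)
The plan is to derive the basic energy law by combining the mechanical energy balance with an entropy-type estimate, exactly as in the classical one-dimensional Navier--Stokes analysis of Kazhikhov and Jiang, with additional terms coming from the Allen--Cahn coupling. First I would multiply the momentum equation $u_t+(\theta/v)_x=(u_x/v)_x-\frac{\epsilon}{2}(\phi_x^2/v^2)_x$ by $u$ and integrate over $\mathbb{R}$; this produces $\frac{d}{dt}\int\frac{u^2}{2}\,dx$ together with a term $-\int(\theta/v)_x u\,dx$, a dissipation term $-\int \frac{u_x^2}{v}\,dx$, and a phase-field stress term $\frac{\epsilon}{2}\int\frac{\phi_x^2}{v^2}u_x\,dx$. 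Using $v_t=u_x$ one rewrites the $(\theta/v)$-term and the phase-field term so that they combine with contributions from the other equations. Next I would treat the internal-energy (temperature) equation: dividing $\theta_t+\frac{\theta}{v}u_x-(\theta^\beta\theta_x/v)_x=\frac{u_x^2}{v}+v\mu^2$ by $\theta$ and integrating gives $\frac{d}{dt}\int(\theta-\ln\theta)\,dx$ plus $\int\frac{u_x}{v}\,dx$, the heat-dissipation term $\int\frac{\theta^\beta\theta_x^2}{v\theta^2}\,dx$, and the positive source terms $\int\frac{u_x^2}{v\theta}\,dx+\int\frac{v\mu^2}{\theta}\,dx$, which are precisely the three pieces in $V(t)$. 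The term $\int\frac{u_x}{v}\,dx$ coming from the pressure work will cancel against $\int\frac{u_x}{v}\,dx$ generated by integrating the momentum equation and using $v_t-u_x=0$, after recognizing $\frac{d}{dt}\int(v-\ln v)\,dx=\int(1-\tfrac1v)u_x\,dx$.

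For the phase field I would use the Allen--Cahn equations $\phi_t=-v\mu$ and $\mu=\frac1\epsilon(\phi^3-\phi)-\epsilon(\phi_x/v)_x$. Multiplying $\phi_t=-v\mu$ by $\mu$ and integrating, then substituting the expression for $\mu$, gives $-\int v\mu^2\,dx$ on one side and on the other the time derivative of the Ginzburg--Landau free energy $\int\big(\frac{1}{4\epsilon}(\phi^2-1)^2+\frac{\epsilon}{2}\frac{\phi_x^2}{v}\big)\,dx$, modulo a term $\frac{\epsilon}{2}\int\frac{\phi_x^2}{v^2}v_t\,dx=\frac{\epsilon}{2}\int\frac{\phi_x^2}{v^2}u_x\,dx$ that must be tracked carefully: this is exactly the phase-field stress term produced by the momentum estimate, and the two are designed to cancel. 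The key bookkeeping point is that the coupling terms — the extra stress $-\frac{\epsilon}{2}(\phi_x^2/v^2)_x$ in the momentum equation and the chemical-potential dissipation $v\mu^2$ in the temperature equation — are each matched, after integration by parts, by a corresponding term in the time derivative of the total energy/entropy functional, so that summing the mechanical identity, the $v-\ln v$ identity, the $\theta-\ln\theta$ identity, and the free-energy identity yields a closed equation with only nonnegative dissipation on the right. Integrating in $t$ from $0$ to $T$ and discarding the dissipation gives \eqref{basic energy inequality} with $E_0$ as in \eqref{E0}; note that since $v-\ln v-1\ge 0$ and $\theta-\ln\theta-1\ge 0$ pointwise, all terms on the left are nonnegative, so in fact one gets the stated supremum bound.

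The main obstacle I anticipate is not any single estimate but the precise cancellation of the cross terms: one must be scrupulous about signs and about which $u_x$-terms come from pressure work versus from the $\frac{d}{dt}(v-\ln v)$ identity, and about the $\frac{\epsilon}{2}\int\frac{\phi_x^2}{v^2}u_x\,dx$ term appearing with opposite signs in the momentum estimate and the free-energy estimate (this is where the factor $v$ in $\phi_t=-v\mu$, coming from the Lagrangian change of variables, is essential). A secondary technical point is justifying the integrations by parts and the vanishing of boundary terms at $x=\pm\infty$: this follows from the assumed regularity class \eqref{local solution space} together with the far-field conditions \eqref{initial condition}, since $v-1,u,\theta-1\in H^1$ and $\phi_x\in H^1$ force the relevant quantities to decay at spatial infinity. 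Once all terms are assembled, the inequality is immediate; I would present the computation as a single summed identity rather than four separate ones to make the cancellations transparent.
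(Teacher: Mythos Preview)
Your proposal is correct and follows essentially the same approach as the paper: multiply \eqref{NSFAC-Lagrange}$_1$ by $1-\tfrac{1}{v}$, \eqref{NSFAC-Lagrange}$_2$ by $u$, \eqref{NSFAC-Lagrange}$_3$ by $\mu$, and \eqref{NSFAC-Lagrange}$_5$ by $1-\tfrac{1}{\theta}$, then sum so that the cross terms cancel exactly and only the nonnegative dissipation $V(t)$ survives. One minor slip: to obtain $\tfrac{d}{dt}\int(\theta-\ln\theta)\,dx$ you must multiply the temperature equation by $1-\tfrac{1}{\theta}$, not merely divide by $\theta$; this is also what makes the terms $\tfrac{u_x^2}{v}$ and $v\mu^2$ (without the $1/\theta$) appear and cancel against the viscous and chemical-potential dissipations from the momentum and phase-field identities.
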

\begin{proof}  From \eqref{NSFAC-Lagrange},  \eqref{initial condition} and  \eqref{initial energy}, we have
\begin{equation}\label{conservation}
  \int_{-\infty}^{+\infty}(v-1)dx=1,\ \ \int_{-\infty}^{+\infty}\big(\frac {u^2}{2}+(\theta-1)+\frac{1}{4\epsilon}(\phi^2-1)^2+\frac{\epsilon}{2}\frac{\phi_x^2}{v}\big)dx=1.
\end{equation}Multiplying \eqref{NSFAC-Lagrange}$_1$ by $1-\frac1v$, \eqref{NSFAC-Lagrange}$_2$ by $u$, \eqref{NSFAC-Lagrange}$_3$ by $\mu$, \eqref{NSFAC-Lagrange}$_5$ by $1-\frac1\theta$,  adding them together, we get
\begin{eqnarray}\label{basic inequality}
&& \big(\frac{u^2}{2}+\frac{1}{4\epsilon}(\phi^2-1)^2+\frac{\epsilon}{2}\frac{\phi_x^2}{v}+(v-\ln v-1)+(\theta-\ln \theta-1)\big)_t+\big(\frac{\theta^\beta\theta_x^2}{v\theta^2}+\frac{u_x^2}{v\theta}+\frac{v\mu^2}{\theta}\big) \notag\\
&&=u_x+\big(\frac{uu_x}{v}-\frac{u\theta}{v}\big)_x+\big((1-\theta^{-1})\frac{\theta^\beta\theta_x}{v}\big)_x+\epsilon\big(\frac{\phi_x\phi_t}{v}\big)_x-\frac{\epsilon}{2}\big(\frac{\phi_x^2u}{v^2}\big)_x.
\end{eqnarray}
Integrating \eqref{basic inequality} over $(-\infty,+\infty)\times[0,T]$ by parts,  \eqref{basic energy inequality} is obtained,  the proof of Lemma \ref{Fundamental energy inequality} is finished.
\end{proof}

\begin{lemma}\label{Division of area}
Let $(v,u,\theta,\phi)$ be a smooth solution of \eqref{NSFAC-Lagrange}-\eqref{initial condition} on $(-\infty,+\infty)\times [0,T]$, then  $\forall n=0,\pm1,\pm2,\cdots$,
there are points $a_{n}(t)$, $b_n(t)$ on the interval $[n,n+1]$, such that
\begin{equation}\label{bar v theta}
\begin{array}{llll}
 \displaystyle v(a_n(t),t)\overset{\mathrm{def}}{=}\bar v_n(t)= \int_n^{n+1}v(x,t)dx\in[ \alpha_1,\alpha_2],\\
  \displaystyle \theta(b_n(t),t)\overset{\mathrm{def}}{=}\bar \theta_n(t)=\int_n^{n+1}\theta(x,t)dx\in[ \alpha_1,\alpha_2],
\end{array} \end{equation}
 where $0<\alpha_1<\alpha_2$  are  the two roots of the following algebraic equation
 \begin{equation}
   y-\ln y -1=E_0.
 \end{equation}
\end{lemma}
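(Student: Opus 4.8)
The plan is to combine the mean value theorem for integrals with the basic energy estimate of Lemma \ref{Fundamental energy inequality} and the convexity of $g(y):=y-\ln y-1$; the point of the lemma is to produce, at each time $t$ and on each unit cell $[n,n+1]$, a reference point where $v$ (resp.\ $\theta$) is trapped in a fixed compact subinterval of $(0,\infty)$ that depends on neither $n$ nor $t$.

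\emph{Step 1 (existence of $a_n,b_n$).} Because $(v,u,\theta,\phi)$ is a smooth solution, $v(\cdot,t)$ and $\theta(\cdot,t)$ are continuous on $[n,n+1]$, an interval of length one. The mean value theorem for integrals then yields points $a_n(t),b_n(t)\in[n,n+1]$ with $v(a_n(t),t)=\int_n^{n+1}v\,dx=:\bar v_n(t)$ and $\theta(b_n(t),t)=\int_n^{n+1}\theta\,dx=:\bar\theta_n(t)$.

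\emph{Step 2 (local energy bound and Jensen).} By \eqref{basic energy inequality}, $\int_{\mathbb{R}}(v-\ln v-1)\,dx\le E_0$ and $\int_{\mathbb{R}}(\theta-\ln\theta-1)\,dx\le E_0$ for every $t\in[0,T]$; since both integrands are nonnegative, the same bounds hold with $\int_{\mathbb{R}}$ replaced by $\int_n^{n+1}$, uniformly in $n$ and $t$. The function $g$ is strictly convex on $(0,\infty)$, vanishes at $y=1$, and blows up as $y\to0^+$ and as $y\to+\infty$, so --- using $E_0>0$, which follows from the normalization \eqref{initial energy} --- the equation $y-\ln y-1=E_0$ has exactly two roots $0<\alpha_1<1<\alpha_2$. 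Since $[n,n+1]$ has unit length, Jensen's inequality gives
\[
g\big(\bar v_n(t)\big)=g\Big(\int_n^{n+1}v\,dx\Big)\le\int_n^{n+1}g(v)\,dx\le E_0,
\]
hence $\bar v_n(t)\in[\alpha_1,\alpha_2]$, and applying the same argument to $\theta$ yields $\bar\theta_n(t)\in[\alpha_1,\alpha_2]$. This is precisely \eqref{bar v theta}.

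I do not anticipate any genuine obstacle here: the argument is a routine application of the mean value theorem and Jensen's inequality. The only details worth a sentence of care are checking that $E_0>0$, so that $\alpha_1<\alpha_2$ are genuinely distinct, and observing that the unit length of the cells is exactly what removes all normalization constants, so that the trapping interval $[\alpha_1,\alpha_2]$ is uniform in $n$ and $t$ --- which is what makes this lemma usable as the starting point for the subsequent pointwise lower and upper bounds on $v$ and $\theta$.
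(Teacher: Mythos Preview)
Your proof is correct and follows essentially the same route as the paper: Jensen's inequality applied to the convex function $g(y)=y-\ln y-1$ on the unit cell $[n,n+1]$, combined with the energy bound \eqref{basic energy inequality}, traps the averages $\bar v_n,\bar\theta_n$ in $[\alpha_1,\alpha_2]$, and the mean value theorem produces the points $a_n,b_n$. Your version is in fact a bit more explicit than the paper's, which does not spell out the mean value theorem step or the positivity of $E_0$.
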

\begin{proof}
By using the convexity of the function $y-\ln y-1$ and the Jensen's  inequality, then
\begin{equation}\label{Jensen}
\begin{array}{llll}
 \displaystyle \int_{n}^{n+1}\theta dx-\ln\int_{n}^{n+1} \theta dx  -1 \le \int_{n}^{n+1} (\theta-\ln\theta-1 ) dx,\\
\displaystyle \int_{n}^{n+1}v dx-\ln\int_{n}^{n+1} v dx-1  \le \int_{n}^{n+1} (v-\ln v-1 ) dx.
\end{array}
\end{equation}
Combining with the inequality \eqref{basic energy inequality}, using the convexity of the function $y-\ln y-1$ once again, \eqref{bar v theta} is obtained immediately.  The proof of Lemma \ref{Division of area} is finished.
\end{proof}

\begin{lemma}\label{lem-expression of v}
Let $(v,u,\theta,\phi)$ be a smooth solution of \eqref{NSFAC-Lagrange}-\eqref{initial condition} on $(-\infty,+\infty)\times [0,T]$, then $\forall n=0,\pm1,\pm2,\cdots$, it has the following expression of $v$
\begin{equation}\label{expression of v}
  v(x,t)=   D(x,t) Y(t) + \int_0^t\frac{D(x,t) Y(t)\big(\theta(x,\tau)+\frac{\epsilon}{2}\frac{\phi^2_x(x,\tau)}{v(x,\tau)}\big)}{ D(x,\tau) Y(\tau)}d\tau,\quad x\in[n,n+1],
\end{equation}
 where
 \begin{equation}\label{D}
    D(x,t)=v_0(x)e^{\int_{a_n}^{x}(u(y,t)-u_0(y))dy},
 \end{equation}
 and
 \begin{equation}\label{Y}
  Y(t)=\frac{v(a_n(t),t)}{v_0(a_n(t))}e^{-\int_0^t(\frac{\theta}{v}+\frac{\epsilon}{2}\frac{\phi^2_x}{v^{2}})(a_n(s),s)ds}.
 \end{equation}
\end{lemma}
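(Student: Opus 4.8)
The representation \eqref{expression of v} is of Kazhikhov--Jiang type, and my plan is to derive, at each fixed $x\in[n,n+1]$, a first--order linear ODE in $t$ for $v(x,\cdot)$ and then integrate it. Abbreviate $P:=\frac{\theta}{v}+\frac{\epsilon}{2}\frac{\phi_x^2}{v^2}$, so that $Pv=\theta+\frac{\epsilon}{2}\frac{\phi_x^2}{v}$ is exactly the factor in the numerator of the integrand in \eqref{expression of v}. Using \eqref{NSFAC-Lagrange}$_1$ to write $\frac{u_x}{v}=\frac{v_t}{v}=(\ln v)_t$, equation \eqref{NSFAC-Lagrange}$_2$ takes the form $u_t=\partial_x\big((\ln v)_t-P\big)$, i.e. $\partial_t\big(u-(\ln v)_x\big)=-P_x$. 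Integrating this in $t$ over $[0,t]$ and invoking the initial data \eqref{initial condition} yields the pointwise identity
\[
(\ln v)_x(x,t)=(\ln v_0)_x(x)+\big(u(x,t)-u_0(x)\big)+\partial_x\!\int_0^t P(x,\tau)\,d\tau .
\]

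Now fix $t$; by Lemma \ref{Division of area} the point $a_n(t)\in[n,n+1]$ is then a constant, so I may integrate the last identity in $x$ from $a_n(t)$ to $x$. After exponentiating, the $x$--dependent part assembles into $D(x,t)$ from \eqref{D}, the term $\exp\!\big(\int_0^t P(x,\tau)\,d\tau\big)$ splits off, and the remaining boundary contribution at $x=a_n(t)$ is a function of $t$ alone which one reads off to be $Y(t)$ from \eqref{Y}; thus $v(x,t)=D(x,t)\,Y(t)\,\exp\!\big(\int_0^t P(x,\tau)\,d\tau\big)$. To pass to the stated form, set $W(t):=D(x,t)Y(t)$, so $W(0)=v_0(x)$ since $D(x,0)=v_0(x)$ and $Y(0)=1$; then along the solution $\frac{(Pv)(x,\tau)}{W(\tau)}=P(x,\tau)\exp\!\big(\int_0^{\tau}P(x,s)\,ds\big)=\frac{d}{d\tau}\exp\!\big(\int_0^{\tau}P(x,s)\,ds\big)$, so integrating in $\tau$ makes the sum telescope: $W(t)+\int_0^t\frac{W(t)}{W(\tau)}\big(\theta+\frac{\epsilon}{2}\frac{\phi_x^2}{v}\big)(x,\tau)\,d\tau=W(t)\exp\!\big(\int_0^tP(x,\tau)\,d\tau\big)=v(x,t)$, which is precisely \eqref{expression of v}.

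The one delicate point is the bookkeeping at the base point $a_n(t)$, which moves with $t$. To avoid ever differentiating $t\mapsto a_n(t)$ (only known to be measurable, being produced by a mean--value argument), I would carry out the $t$--integration of the momentum equation \emph{before} $a_n(t)$ is introduced, so that the subsequent $x$--integration is performed at a frozen time with $a_n(t)$ entering only as an endpoint and the base-point contribution is gathered into a function of $t$ alone by inspection. Equivalently — a clean way to sidestep the issue — one may check directly that the right-hand side of \eqref{expression of v}, which has the shape $W(t)\big(1+\int_0^t \frac{g}{W}\,d\tau\big)$ with $g=\theta+\frac{\epsilon}{2}\frac{\phi_x^2}{v}$, solves the linear ODE $\partial_t\tilde v=\big(\partial_t\ln W\big)\tilde v+g$ with $\tilde v(x,0)=v_0(x)$, that $v$ itself solves the same ODE, and invoke uniqueness for linear ODEs. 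In this second route everything reduces to the single identity $\partial_t\ln\big(D(x,t)Y(t)\big)=\frac{u_x}{v}-P$; verifying it from \eqref{D}, \eqref{Y} and the system \eqref{NSFAC-Lagrange} is the step where the $\dot a_n(t)$ terms must be shown to organise correctly, and this is the main technical obstacle of the lemma.
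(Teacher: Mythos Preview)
Your proof is correct and follows essentially the same route as the paper: rewrite the momentum equation as $(\ln v)_{xt}=P_x+u_t$, integrate in $t$, integrate in $x$ from $a_n(t)$ to $x$ to get $v=D\,Y\,e^{\int_0^t P}$, and then pass to the stated form by the telescoping identity (the paper does this by setting $g=\int_0^tP$ and solving $(e^g)_\tau=(Pv)/(DY)$). Your worry about $\dot a_n(t)$ is unnecessary: the identity $v(x,\tau)=D(x,\tau)Y(\tau)e^{g(x,\tau)}$ holds at each fixed $\tau$ with $a_n(\tau)$ as base point, so the ratio $(Pv)/(DY)$ at time $\tau$ equals $P\,e^{g}$ regardless of how $a_n$ moves, and neither $D$ nor $Y$ ever needs to be differentiated in $t$.
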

\begin{proof}
We rewrite \eqref{NSFAC-Lagrange}$_2$ as
\begin{align}
\displaystyle (\ln v)_{xt} =\big(\frac{\theta}{v}+\frac{\epsilon}{2}\frac{\phi_x^2}{v^2}\big)_x+u_t,
\end{align}
where we have used \eqref{NSFAC-Lagrange}$_1$. Integrating the above equation over $(0,t)$, we obtain
\begin{align}\label{lnv1}
\displaystyle (\ln v)_{x} =\big(\int_0^t(\frac{\theta}{v}+\frac{\epsilon}{2}\frac{\phi_x^2}{v^2})d\tau\big)_x+u-u_0+(\ln v_0)_{x},
\end{align}
For $x\in[n,n+1]$, integrating \eqref{lnv1} from $a_n(t)$ to $x$ by parts, we have
\begin{equation}\label{v} v(x,t) = D(x,t) Y(t) e^{\int_0^t\big(\frac{\theta}{v}+\frac{\epsilon}{2}\frac{\phi^2_x}{v^2}\big)(x,s)ds},\end{equation}
with $D(x,t)$ and $Y(t)$ as defined in  \eqref{D} and \eqref{Y}  respectively. Now we introduce the function  $g(x,t)$ as following
\begin{equation}\label{g}  g(x,t) =\int_0^t\big(\frac{\theta}{v}+\frac{\epsilon}{2}\frac{\phi^2_x}{v^2}\big)(x,s)ds,
\end{equation}by using \eqref{v}, we  get the following ordinary differential equation for $g(x,t)$
\begin{equation*}  g_t=\frac{\theta(x,t)+\frac{\epsilon}{2}\frac{\phi^2_x(x,t)}{v(x,t)}}{v(x,t)}
=\frac{\theta(x,t)+\frac{\epsilon}{2}\frac{\phi^2_x(x,t)}{v(x,t)}}{ D(x,t) Y(t)e^g},\end{equation*}and this gives
\begin{equation*}
  e^g  =1+\int_0^t\frac{\theta(x,\tau)+\frac{\epsilon}{2}\frac{\phi^2_x(x,\tau)}{v(x,\tau)}}{ D(x,\tau) Y(\tau)}d\tau,\end{equation*}
substituting  the expression above into \eqref{v},  we have \eqref{expression of v}.    Thus the proof of Lemma \ref{lem-expression of v} is finished.
\end{proof}

\begin{lemma}\label{the lower bounds of density and temperature}
Let $(v,u,\theta,\phi)$ be a smooth solution of \eqref{NSFAC-Lagrange}-\eqref{initial condition} on $(-\infty,+\infty)\times [0,T]$, then it holds that for
 \begin{eqnarray}\label{bound of density}
 &&v(x,t)\geq C^{-1},\quad \theta(x,t)\geq C^{-1},\qquad\forall(x,t)\in(-\infty,+\infty)\times [0,T],\notag\\
 && \int_0^T\sup_{x\in\mathbb{R}}\theta dt+\int_0^T\int_{(\theta>2)(t)}\frac{\theta^\beta\theta_x^2}{v\theta^{2-2\eta}}dxdt\leq C,\quad \forall \frac14<\eta<\frac12.
 \end{eqnarray}
\end{lemma}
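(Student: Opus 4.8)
The plan is to combine the representation formula \eqref{expression of v} with the basic energy inequality \eqref{basic energy inequality} and the anchor points $a_n(t),b_n(t)$ from Lemma \ref{Division of area} to produce first a lower bound for $v$, then a lower bound for $\theta$, and finally the weighted integral estimate. First I would fix a smooth cut-off $\varphi_n$ supported near $[n,n+1]$ (to be defined as in \eqref{cut off function}) and localize all estimates to such intervals; since all bounds will be uniform in $n$, the pointwise lower bounds follow. The starting point is the identity \eqref{expression of v}: because $D(x,t)\ge 0$, $Y(t)>0$, $\theta\ge 0$ and $\phi_x^2/v\ge 0$, every term on the right-hand side is nonnegative, so $v(x,t)\ge D(x,t)Y(t)$; hence the task reduces to bounding $D$ and $Y$ from below. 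For $D(x,t)=v_0(x)e^{\int_{a_n}^x (u(y,t)-u_0(y))\,dy}$ one controls the exponent by Cauchy--Schwarz on the unit interval, $|\int_{a_n}^x(u-u_0)\,dy|\le \|u\|_{L^2}+\|u_0\|_{L^2}\le C$, using that $\|u(\cdot,t)\|_{L^2}^2$ is bounded by \eqref{basic energy inequality}; so $D(x,t)\ge C^{-1}$.

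The delicate factor is $Y(t)$, which involves $v(a_n(t),t)/v_0(a_n(t))$ times $e^{-\int_0^t(\frac{\theta}{v}+\frac{\epsilon}{2}\frac{\phi_x^2}{v^2})(a_n(s),s)\,ds}$. The numerator $v(a_n(t),t)=\bar v_n(t)\in[\alpha_1,\alpha_2]$ by Lemma \ref{Division of area}, and $v_0(a_n(t))\ge \inf v_0>0$, so these are harmless; the real issue is showing the exponential integral $\int_0^t(\frac{\theta}{v}+\frac{\epsilon}{2}\frac{\phi_x^2}{v^2})(a_n(s),s)\,ds$ stays bounded above. This is exactly where the argument of Kazhikhov--Jiang enters: rather than estimating the integrand at the single point $a_n$, one integrates \eqref{expression of v} against $\varphi_n$ in $x$, uses $\int v\varphi_n\,dx \le \int (v-\ln v-1)\varphi_n\,dx + C \le E_0+C$ together with the lower bound $v\ge DY$ already obtained, and thereby converts a bound on $\int_n^{n+1} v$ into the statement that $Y(t)$ cannot decay too fast; more precisely one shows $\int_0^t \big(\frac{\theta}{v}+\frac{\epsilon}{2}\frac{\phi_x^2}{v^2}\big)(a_n(s),s)\,ds$ is controlled because otherwise $v$ at time $t$ would have to be too large somewhere on $[n,n+1]$, contradicting the $L^1$ bound from \eqref{conservation}. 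Carrying this out yields $Y(t)\ge C^{-1}$ and hence $v(x,t)\ge C^{-1}$ on $\mathbb{R}\times[0,T]$.

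With $v\ge C^{-1}$ in hand, the lower bound for $\theta$ comes from the temperature equation \eqref{NSFAC-Lagrange}$_5$. The standard device is to test the equation for $w:=\theta^{-\alpha}$ (or equivalently to work with $\theta-\ln\theta-1$ and the dissipation term $\frac{\theta^\beta\theta_x^2}{v\theta^2}$ already present in \eqref{V(t)}): multiplying \eqref{NSFAC-Lagrange}$_5$ by $-\theta^{-\alpha-1}\varphi_n$ for $0<\alpha<1$, integrating in $x$ and $t$, and using $v\ge C^{-1}$, $\|u_x\|_{L^2_tL^2_x}$-type control from \eqref{basic energy inequality}, plus $\bar\theta_n(t)\ge\alpha_1$ from Lemma \ref{Division of area} to anchor $\theta$ from below at $b_n(t)$, produces simultaneously the weighted estimate $\int_0^T\int \frac{\theta^\beta\theta_x^2}{v\theta^{\alpha+1}}\varphi_n\,dx\,dt\le C$ and, via $\theta^{-\alpha}(x,t)\le \theta^{-\alpha}(b_n(t),t) + \big(\int|(\theta^{-\alpha})_x|\varphi_n\,dx\big)$ controlled by that weighted term, the pointwise bound $\theta(x,t)\ge C^{-1}$. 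The main obstacle, and the step deserving the most care, is the control of $Y(t)$ from below — i.e. turning the $L^1$-in-$x$ bound on $v$ into an upper bound on the exponential integral appearing in \eqref{Y} — since this is where the coupling with $\theta$ and with the phase-field gradient $\phi_x^2/v$ must be handled without yet having any upper bound on $\theta$; one relies here on the fact that these quantities only appear with favorable sign in \eqref{expression of v} and that the energy inequality already supplies $\int_0^T V(t)\,dt\le E_0$.
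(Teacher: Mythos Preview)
Your treatment of the lower bound for $v$ is essentially the paper's argument: integrate the representation \eqref{expression of v} over $[n,n+1]$, use $\bar v_n(t)\in[\alpha_1,\alpha_2]$, $\bar\theta_n(t)\le\alpha_2$, and $\int\phi_x^2/v\,dx\le C$ from \eqref{basic energy inequality} to obtain a Gronwall inequality for $Y^{-1}(t)$, namely $C^{-1}Y^{-1}(t)\le 1+\int_0^t Y^{-1}(s)\,ds$, whence $Y(t)\ge C^{-1}$ and $v\ge DY\ge C^{-1}$. (Note: you invoke the \emph{upper} bound on $\int v$, but it is the \emph{lower} bound $\int_n^{n+1}v\ge\alpha_1$ that feeds the Gronwall step.)

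The gap is in your route to $\theta\ge C^{-1}$. Multiplying \eqref{NSFAC-Lagrange}$_5$ by $-\theta^{-\alpha-1}\varphi_n$ with $0<\alpha<1$ does produce the weighted estimate $\int_0^T\!\int\frac{\theta^\beta\theta_x^2}{v\theta^{\alpha+1}}\varphi_n\,dx\,dt\le C$, but the quantity you then control in $\sup_t$ is only $\int\theta^{-\alpha}\varphi_n\,dx$ with $\alpha<1$, which does not preclude $\theta$ from touching zero. Your proposed Sobolev step $\theta^{-\alpha}(x,t)\le\theta^{-\alpha}(b_n,t)+\int|(\theta^{-\alpha})_x|\varphi_n\,dx$ fails because the right-hand side is controlled only after time integration (the dissipation is an $L^1_t$ quantity, not $L^\infty_t$), and any attempt to bound it at fixed $t$ via Cauchy--Schwarz produces a factor $\int\theta^{-\alpha-\beta}\varphi_n\,dx$, which is exactly the kind of negative power you do not yet control --- the argument is circular. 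The paper avoids this by testing with $\theta^{-p}\varphi_n$ for $p>\max\{2,\beta+1\}$: the pressure-work and boundary errors are then estimated by $\|\theta^{-1}\varphi_n^{1/(p-1)}\|_{L^{p-1}}^{p-2}$ and $\|\theta^{-1}\varphi_n^{1/(p-1)}\|_{L^{p-1}}^{p-\beta-1}$, both strictly sublinear in $\|\theta^{-1}\|_{L^{p-1}}^{p-1}$, so Gronwall closes and gives a bound on $\|\theta^{-1}\|_{L^{p-1}([n,n+1])}$ uniform in $n$ and $p$; sending $p\to\infty$ yields $\theta\ge C^{-1}$. Only \emph{after} this is established does one return with the small exponent $0<\alpha<1$ to record the weighted integral estimate.
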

\begin{proof}
Firstly, from \eqref{condition 1}, \eqref{conservation} and the definition \eqref{D} of $D$, we have
\begin{equation}\label{upper and lower bound for D}
 C^{-1}\leq D(x,t)\leq C,\qquad  \forall(x,t)\in(-\infty,+\infty)\times [0,T].
\end{equation}
Moreover,  by using \eqref{expression of v}, we have
\begin{equation}
    Y^{-1}(t)\int_n^{n+1}v(x,t)dx= \int_n^{n+1}  D(x,t) dx+ \int_0^t\int_n^{n+1} \frac{D(x,t) \big(\theta(x,\tau)+\frac{\epsilon}{2}\frac{\phi^2_x(x,\tau)}{v(x,\tau)}\big)}{ D(x,\tau) Y(\tau)}dxd\tau,\notag
\end{equation}
Applying the inequality \eqref{bar v theta} and  \eqref{upper and lower bound for D} to the result above,  there exists a positive constant $C$, satisfying
\begin{equation}\label{Y1}
 C^{-1}Y^{-1}(t)\leq 1+\int_0^tY^{-1}(s)ds\leq C Y^{-1}(t),
 \end{equation}
which implies that
\begin{equation}\label{upper and lower bound for Y}
  0<C^{-1}\leq Y(t)\leq C<+\infty,\qquad \forall(x,t)\in(-\infty,+\infty)\times [0,T].
\end{equation}
From \eqref{v}, \eqref{upper and lower bound for D} and \eqref{upper and lower bound for Y}, we obtain the lower bound of $v$ as following
  \begin{equation}\label{the lower bound of v}
   v(x,t)\geq C^{-1},\qquad \forall(x,t)\in(-\infty,+\infty)\times[0,T].
  \end{equation}

Secondly,  Denoting by
\begin{equation}\label{set of theta}
(\theta>2)(t)\overset{\mathrm{def}}{=}\big\{x\in\mathbb{R}\big|\theta(t)>2\big\},\quad (\theta<\frac12)(t)\overset{\mathrm{def}}{=}\big\{x\in\mathbb{R}\big|\theta(t)<\frac12\big\}.
\end{equation}
By using \eqref{basic energy inequality}, one has
\begin{eqnarray*}
% \nonumber to remove numbering (before each equation)
  E_0&\geq&\int_{(\theta<\frac12)(t)}(\theta-\ln\theta-1)dx+\int_{(\theta>2)(t)}(\theta-\ln\theta-1)dx \\
  &\geq&\big(\ln2-\frac12\big)\big|(\theta<\frac12)\big| +\big(1-\ln2\big)\big|(\theta>2)\big|, 
\end{eqnarray*}
which leading to the following inequality
\begin{equation}\label{upper bound of the set theta}
  \big|(\theta<\frac12)\big| +\big|(\theta>2)\big|\leq C.
\end{equation}
Denoting
\begin{equation}\label{the truncation function method}
  \big(\theta^{-1}-2\big)_+\overset{\mathrm{def}}{=}\max\big\{\theta^{-1}-2,0\big\},
\end{equation}
for $\forall p>2$, multiplying \eqref{NSFAC-Lagrange}$_3$ by $\theta^{-2}\big(\theta^{-1}-2\big)_+^p$, integrating over $(-\infty,+\infty)$  with respect to $x$, combining with \eqref{the lower bound of v} and \eqref{upper bound of the set theta},  one has
\begin{align}\label{lower bound of theta}
&\frac{1}{p+1}\frac{d}{dt}\int_{-\infty}^{+\infty}\big( {\theta}^{-1}-2\big)_+^{p+1} dx+\int_{-\infty}^{+\infty}\big(\frac{u_x^2}{v\theta^2}+\frac{v\mu^2}{\theta^2}\big) \big({\theta}^{-1}-2\big)_+^{p}dx\notag\\
&\qquad+2\int_{-\infty}^{+\infty}\frac{\theta_x^2}{v\theta^3}\big( {\theta}^{-1}-2\big)_+^{p}dx+p\int_{-\infty}^{+\infty}\frac{\theta_x^2}{v\theta^2}\big( {\theta}^{-1}-2\big)_+^{p-1}dx\notag\\
 &\leq\int_{-\infty}^{+\infty}\frac{u_x}{v\theta}\big( {\theta}^{-1}-2\big)_+^{p} dx\\ 
 & \leq\frac{1}{2}\int_{-\infty}^{+\infty}\frac{u_x^2}{v\theta^2}\big( {\theta}^{-1}-2\big)_+^{p}dx+C\int_{-\infty}^{+\infty}\frac{1}{v}\big( {\theta}^{-1}-2\big)_+^{p}dx\notag\\ 
 &\leq\frac{1}{2}\int_{-\infty}^{+\infty}\frac{u_x^2}{v\theta^2}\big( {\theta}^{-1}-2\big)_+^{p} dx+C\Big(\int_{-\infty}^{+\infty}\big( {\theta}^{-1}-2\big)_+^{p+1}\Big)^{\frac{p}{p+1}}.\notag\end{align}
Applying Gronwall's inequality to  the above result \eqref{lower bound of theta}, one derives
\begin{equation}\label{lower bound of theta Lp norm}  \sup_{0\leq t\leq T}\left\|  \big( {\theta}^{-1}-2\bar\theta\big)_+(\cdot,t) \right\|_{L^{p+1}(\mathbb{R})}\leq C,\ \ \forall p>2,
\end{equation}
where $C$ is independent of $p$, and further, taking the limit as $p\rightarrow+\infty$ on both sides of  \eqref{lower bound of theta Lp norm}, one  eventually gets 
\begin{equation}\label{lower bound of theta L-infty norm} 
 \sup_{0\leq t\leq T}\left\|  \big( {\theta}^{-1}-2\big)_+(\cdot,t) \right\|_{L^{\infty}(\mathbb{R})}\leq C,
\end{equation}
which the positive lower bound of temperature $\theta$ is thus proved.

Finally, setting
\begin{equation}\label{eta}
  \eta\overset{\mathrm{def}}{=}\frac{1}{4}\max\{1,2-\beta\}\in\big(\frac14,\frac{1}{2}\big),
  \end{equation}
 from Cauchy inequality, combining with \eqref{basic energy inequality}, one has
\begin{align}\label{theta0}
&\int_0^T\sup_{x\in\mathbb{R}}\theta dt\leq C\int_0^T\sup_{x\in\mathbb{R}}\int_{-\infty}^{x}\partial_y\big(\theta-2\big)_+dydt+C\notag \\
&\leq C\int_0^T\int_{(\theta>2)(t)}\frac{\theta^\beta \theta_x^2}{v\theta^{2-2\eta}}dxdt+C\int_0^T\int_{(\theta>2)(t)}\frac{v\theta^{2-2\eta}}{\theta^\beta} dxdt +C\\
&\leq C\int_0^T\int_{(\theta>2)(t)}\frac{\theta^\beta \theta_x^2}{v\theta^{2-2\eta}}dxdt+C\int_0^T\sup_{x\in\mathbb{R}}\theta^{\max\{2-2\eta-\beta,0\}}\int_{(\theta>2)(t)}v dxdt +C\notag\\
&\leq C\int_0^T\int_{(\theta>2)(t)}\frac{\theta^\beta \theta_x^2}{v\theta^{2-2\eta}}dxdt+\delta\int_0^T\sup_{\mathbb{R}}\theta dt+C(\delta),\notag
\end{align}
and it immediately follows that
\begin{equation}\label{The integral of theta}
  \int_0^T\sup_{x\in\mathbb{R}}\theta dt\leq \frac{C}{1-\delta}\int_0^T\int_{(\theta>2)(t)}\frac{\theta^\beta \theta_x^2}{v\theta^{2-2\eta}}dxdt+C(\delta).
\end{equation}
Multiplying \eqref{NSFAC-Lagrange}$_3$ by $\big(\theta^{\eta}-(2)^{\eta}\big)_+\theta^{\eta-1}$ and integrating over $(-\infty,+\infty)\times[0,T]$ by parts, combining  \eqref{upper bound of the set theta} and \eqref{basic energy inequality}, then 
\begin{align*}
% \nonumber to remove numbering (before each equation)
 & (1-2\eta)\int_0^T\int_{(\theta>2)(t)}\frac{\theta^\beta\theta_x^2}{v\theta^{2-2\eta}}dxdt+\int_0^T\int_{-\infty}^{+\infty}\Big(\frac{u_x^2}{v}+v\mu^2\Big)
\Big(\theta^{\eta}-2^{\eta}\Big)_+\theta^{\eta-1}dx\nonumber \\
 &=\frac{1}{2\eta}\int_{-\infty}^{+\infty}\Big(\big(\theta^{\eta}-2^{\eta}\big)_+^2-\big(\theta_0^{\eta}-(2\bar\theta)^{\eta}\big)_+^2\Big)dx+2^\eta(1-\eta)
 \int_0^T\int_{(\theta>2)(t)}\frac{\theta^\beta\theta_x^2}{v\theta^{2-\eta}}dxdt\nonumber\\
 &\qquad+\int_0^T\int_{-\infty}^{+\infty}\frac{\theta u_x}{v}\Big(\theta^{\eta}-2^{\eta}\Big)_+\theta^{\eta-1}dxdt\\
&\leq C+\frac{1-2\eta}2\int_0^T\int_{(\theta>2)(t)}\frac{\theta^\beta\theta_x^2}{v\theta^{2-2\eta}}dxdt+
\frac12\int_0^T\int_{-\infty}^{+\infty}\frac{u_x^2}{v}\Big(\theta^{\eta}-2^{\eta}\Big)_+\theta^{\eta-1}dxdt  \nonumber\\
&\qquad+C\int_0^T\int_{-\infty}^{+\infty}\frac{\theta^2}{v}\Big(\theta^{\eta}-2^{\eta}\Big)_+\theta^{\eta-1}dx dt,\nonumber
\end{align*}
which yields 
\begin{equation}\label{auxiliary inequality for estimation of temperature 1}
\left.\begin{array}{llll}
\displaystyle\frac{1-2\eta}{2}\int_0^T\int_{(\theta>2)(t)}\frac{\theta^\beta\theta_x^2}{v\theta^{2-2\eta}}dxdt+
\int_0^T\int_{-\infty}^{+\infty}\Big(\frac{u_x^2}{2v}+v\mu^2\Big)
\displaystyle\Big(\theta^{\eta}-2^{\eta}\Big)_+\theta^{\eta-1}dx \\
\displaystyle\leq C\int_0^T\int_{(\theta>2)(t)}\theta^{\eta+1}\Big(\theta^{\eta}-2^{\eta}\Big)dx dt+C \\
\displaystyle\leq C\int_0^T\sup_{\mathbb{R}}\theta^{2\eta}\int_{(\theta>2)(t)}\theta dx dt+C\int_0^T\sup_{\mathbb{R}}\theta^{2\eta}\int_{(\theta>2)(t)}\theta^{1-\eta} dx dt+C\\
\displaystyle \leq C\int_0^T\sup_{x\in\mathbb{R}}\theta^{2\eta} dt+C\leq\delta \int_0^T\sup_{x\in\mathbb{R}}\theta dt+C(\delta).
\end{array}\right.
\end{equation}
Thus, combining \eqref{auxiliary inequality for estimation of temperature 1} and \eqref{The integral of theta}, one  derives
\begin{equation}\label{integral sup theta}
  \int_0^T\sup_{x\in\mathbb{R}}\theta dt+\int_0^T\int_{(\theta>2)(t)}\frac{\theta^\beta\theta_x^2}{v\theta^{2-2\eta}}dxdt\leq C.
\end{equation}
  and moreover, together with \eqref{basic energy inequality}, one obtains
\begin{equation}\label{integral sup theta 2}
  \int_0^T\int_{-\infty}^{+\infty}\frac{\theta^\beta\theta_x^2}{v\theta^{2-2\eta}}dxdt\leq C.
\end{equation}
The proof of Lemma 2.5 is completed. 
  \end{proof}

% For the sake of simplicity, we define
% \begin{equation}\label{V(t)}
%  V(t)=\int_{-\infty}^{+\infty}\big(\frac{\theta_x^2}{v\theta^2}+\frac{u_x^2}{v\theta}+\frac{v\mu^2}{\theta}\big)dx.
% \end{equation}
%without making a special statement, all the constants $C$ are independent of $n$ throughout this paper.
\begin{lemma}\label{the upper bounds of density 1}
Let $(v,u,\theta,\phi)$ be a smooth solution of \eqref{NSFAC-Lagrange}-\eqref{initial condition} on $(-\infty,+\infty)\times [0,T]$, then $\forall n=0,\pm1,\pm2,\cdots$,  it has the following inequalities:
\begin{equation}\label{bound of phi and v}
|\phi(x,t)|\leq C,\qquad v(x,t)\leq C,\qquad \forall (x,t)\in (-\infty,+\infty)\times[0,T],
\end{equation}
where $C$ is only dependent of $\epsilon$, $E_0$.
\end{lemma}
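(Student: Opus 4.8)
The plan is to establish the two parts of \eqref{bound of phi and v} separately, the bound on $\phi$ being the soft part. For $\phi$ I would argue by a maximum principle for $h:=\phi^2-1$. Using the third and fourth equations of \eqref{NSFAC-Lagrange} together with $\phi\phi_x=\tfrac12 h_x$, a short computation gives the closed relation
\begin{equation*}
 h_t-\epsilon v\Big(\frac{h_x}{v}\Big)_x+\frac{2v\phi^2}{\epsilon}\,h=-2\epsilon\phi_x^2\le0,\qquad h(\cdot,0)=\phi_0^2-1\le0,
\end{equation*}
the initial inequality being part of \eqref{condition 2}. This operator is uniformly parabolic — the coefficient of $h_{xx}$ is exactly $1$ and the first-order term $-(v_x/v)h_x$ drops out at an interior spatial extremum — and its zeroth-order coefficient $2v\phi^2/\epsilon$ is nonnegative; since the regularity of the strong solution in Lemma~\ref{Local existence} makes $h$ continuous with $h\to0$ as $|x|\to\infty$, $h$ cannot attain a positive maximum on $\mathbb{R}\times[0,T]$. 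Hence $\phi^2\le1$, i.e. $|\phi|\le1$, which gives the first inequality in \eqref{bound of phi and v} (with constant $1$) and the $\phi$-part of \eqref{upper and lower bound}.

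For the upper bound on $v$ I would start from the representation \eqref{expression of v}. Besides $D,Y\in[C^{-1},C]$ from Lemma~\ref{the lower bounds of density and temperature}, the key point is that $v(a_n(s),s)=\bar v_n(s)\in[\alpha_1,\alpha_2]$ (Lemma~\ref{Division of area}) and $\theta(a_n(s),s)\ge C^{-1}$ force $(\theta/v)(a_n(s),s)\ge c_0>0$, so that $Y(t)/Y(\tau)\le Ce^{-c_0(t-\tau)}$ while $D(x,t)/D(x,\tau)\le C$ (its exponent being controlled by the $L^2$-norm of $u$ on a unit interval); plugging these into \eqref{expression of v} gives, for $x\in[n,n+1]$,
\begin{equation*}
 v(x,t)\le C+C\int_0^t e^{-c_0(t-\tau)}\Big(\theta(x,\tau)+\frac\epsilon2\,\frac{\phi_x^2(x,\tau)}{v(x,\tau)}\Big)\,d\tau,
\end{equation*}
where the time-exponential is precisely what makes the final constant independent of $T$. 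It then remains to bound the two terms in the integrand and to close the inequality by Gronwall's lemma in $t$ for fixed $x$.

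For the temperature I would write $\theta(x,\tau)\le\bar\theta_n(\tau)+\|\theta_x(\cdot,\tau)\|_{L^1([n,n+1])}\le\alpha_2+A_n(\tau)$ and estimate $A_n(\tau)$ by Cauchy--Schwarz against the degenerate dissipation $\int\frac{\theta^\beta\theta_x^2}{v\theta^{\alpha+1}}\varphi_n\,dx$ of Lemma~\ref{the lower bounds of density and temperature}, using $\int_n^{n+1}v\,dx=\bar v_n\le\alpha_2$ (so the $v^{-1}$-weight is kept and no upper bound of $v$ creeps in) and a short self-improving argument; this produces $A_n(\tau)\le C(W_n(\tau)^{1/2}+W_n(\tau)^{\delta})$ with some $\delta\in[\tfrac12,1]$ and $\int_0^T W_n\,d\tau\le C$ uniformly in $n$ for a suitable $\alpha\in(0,1)$, whence $\int_0^t e^{-c_0(t-\tau)}\theta(x,\tau)\,d\tau\le C$ uniformly by Hölder. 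For the phase-field term I would establish the pointwise inequality $\sup_{x\in\mathbb{R}}(\phi_x/v)^2(x,t)\le C\big(\|(\theta-1)(\cdot,t)\|_{L^\infty}+1+V(t)\big)$: writing $q=\phi_x/v$, one has $\|q\|_{L^\infty}^2\le2\|q\|_{L^2}\|q_x\|_{L^2}$ with $\|q\|_{L^2}^2=\int\phi_x^2 v^{-2}\,dx\le C$ by Lemma~\ref{the lower bounds of density and temperature} and \eqref{basic energy inequality}, while the elliptic relation for $\mu$ in \eqref{NSFAC-Lagrange} gives $q_x=\epsilon^{-2}(\phi^3-\phi)-\epsilon^{-1}\mu$, so that (using $(\phi^3-\phi)^2\le(\phi^2-1)^2$, where $|\phi|\le1$ enters, the energy bound on $\int(\phi^2-1)^2\,dx$, and $\int\mu^2\,dx\le C\|\theta\|_{L^\infty}\int v\mu^2\theta^{-1}\,dx$) one gets $\|q_x\|_{L^2}^2\le C\big(1+(1+\|(\theta-1)_+\|_{L^\infty})V(t)\big)$, and Young's inequality finishes; consequently $\phi_x^2/v=v q^2\le Cv(x,\tau)\big(1+\|(\theta-1)_+\|_{L^\infty}+V(\tau)\big)$.

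Inserting these two bounds into the inequality for $v$ and applying Gronwall's inequality to $t\mapsto v(x,t)$ — the exponent $C\int_0^t e^{-c_0(t-\tau)}(1+\|(\theta-1)_+\|_{L^\infty}+V)\,d\tau$ is again bounded, by the exponential weight, Hölder, and $\int_0^TV\,dt\le E_0$ — yields $v\le C$ with $C$ depending only on $\epsilon$ and $E_0$, completing \eqref{bound of phi and v}. The main obstacle is the temperature estimate under the degeneracy $\kappa=\theta^\beta$: since $V(t)$ only controls the weighted quantity $\int\theta^{\beta-2}\theta_x^2 v^{-1}\,dx$, the plain $\int\theta_x^2\,dx$ is unavailable and $\theta$ cannot be bounded in $L^\infty(\mathbb{R})$ outright before $v$ is bounded; the scheme closes only because all estimates are performed interval-by-interval with the cut-offs $\varphi_n$, the $v^{-1}$-weights are never discarded, the Jensen-type bounds $\bar v_n,\bar\theta_n\in[\alpha_1,\alpha_2]$ are exploited on each interval, and the time-exponential intrinsic to \eqref{expression of v} soaks up the non-decaying contributions so that the final bounds come out uniform in time.
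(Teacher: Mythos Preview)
Your argument for $|\phi|\le1$ via the maximum principle for $h=\phi^2-1$ is correct (minor slip: the coefficient of $h_{xx}$ is $\epsilon$, not $1$) and genuinely different from the paper. The paper instead works interval-by-interval from the energy: from $\int_n^{n+1}\tfrac1{4\epsilon}(\phi^2-1)^2\le E_0$ it extracts $\int_n^{n+1}|\phi|\,dx\le C$, and then the fundamental theorem of calculus together with $\int\phi_x^2/v\le CE_0$ gives $|\phi(x,t)|\le C$ --- not $\le1$; the sharp bound $|\phi|\le1$ is postponed to the very end of the paper (Lemma~\ref{theta-x}), where it is obtained by precisely your maximum-principle argument. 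Your route is cleaner and also lets you control $\int(\phi^3-\phi)^2\,dx\le\int(\phi^2-1)^2\,dx$ without any weight, whereas the paper's intermediate estimate \eqref{estimate for second derivative form of phi} carries a factor $v/\theta$ and is delicate at this stage since $v$ is not yet bounded above.

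For the upper bound on $v$ your scheme --- representation \eqref{expression of v}, the pointwise bound on $(\phi_x/v)^2$ via the elliptic relation for $\mu$, a time-integrated control of $\|\theta_x\|_{L^1([n,n+1])}$ from the degenerate dissipation, and Gronwall --- coincides with the paper's proof (inequalities \eqref{theta12}--\eqref{the upper bound estimate for v}), and is correct except for one point: your claim that the exponential kernel $e^{-c_0(t-\tau)}$ makes the Gronwall exponent equal to $C\int_0^t e^{-c_0(t-\tau)}\big(1+\|(\theta-1)_+\|_{L^\infty}+V\big)\,d\tau$ is wrong. For $M(t)\le C+C\int_0^t e^{-c_0(t-\tau)}g(\tau)M(\tau)\,d\tau$ the Gronwall bound is $M(t)\le C\exp\big(C\int_0^t g\big)$, not with the kernel retained (substitute $N(t)=e^{c_0t}M(t)$ and the weight disappears). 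Since your $g$ contains the constant $1$, one has $\int_0^t g\ge t$ and the resulting bound on $v$ depends on $T$. The exponential weight does buy you a uniform-in-$T$ bound on the \emph{inhomogeneous} $\theta$-contribution $\int_0^t e^{-c_0(t-\tau)}\theta\,d\tau$, but not on the self-referential $\phi_x^2/v$-term. The paper's own proof uses only the cruder bounds $C^{-1}\le D,Y\le C$, applies Gronwall directly to \eqref{the upper bound estimate for v}, and therefore also produces a $T$-dependent constant; so the stated dependence ``only on $\epsilon,E_0$'' is not what either argument actually delivers, though a $T$-dependent bound is all that Theorem~\ref{thm-global} requires.
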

\begin{proof}
 Firstly, $\forall n=0,\pm1,\pm2,\cdots$,  from \eqref{basic energy inequality}, we have
  \begin{eqnarray}
   \frac{\epsilon}{4}\int_n^{n+1}\phi^4(x,t)dx&\leq& \frac{\epsilon}{2}\int_n^{n+1}\phi^2(x,t)dx+\frac{\epsilon}{4}+E_0\notag\\
   &\leq& \frac{\epsilon}{8}\int_n^{n+1}\phi^4(x,t)dx+\frac{3\epsilon}{4}+E_0,\notag
 \end{eqnarray}
 which implies that
  \begin{equation}\label{phi L4}
   \int_n^{n+1}\phi^4(x,t)dx\leq 6+\frac{8E_0}{\epsilon},
 \end{equation}
and therefore
 \begin{equation}\label{phi L1}
   \int_n^{n+1}\phi(x,t) dx\leq C.
 \end{equation}
where $C$ is independent of $n$.
Now $\forall (x,t)\in[n,n+1)\times[0,T]$,
\begin{eqnarray}\label{lower bound of phi}
% \nonumber to remove numbering (before each equation)
  |\phi(x,t)|&\leq&\left|\int_n^{n+1}\big(\phi(x,t)-\phi(y,t)\big)dy\right|+\left|\int_n^{n+1}\phi(y,t)dy\right|\notag \\
 &\leq&\left|\int_n^{n+1}\Big(\int_y^x\phi_\xi(\xi,t)d\xi\Big)dy\right| +C\\
  &\leq& \Big(\int_n^{n+1}\frac{\phi_x^2}{v}dx\Big)^{\frac12}+C\leq\big(\frac{2}{\epsilon}E_0\big)^\frac12+ C.\notag
\end{eqnarray}

Finally, we can give the upper bounds  of $v$. In fact,  combining the  expression of $v$ \eqref{expression of v}  with the upper and lower bound estimates \eqref{upper and lower bound for D}--\eqref{the lower bound of v}, we have the following inequality
\begin{eqnarray}\label{the upper bound estimate for v 1}
  v(x,t)&=& D(x,t) Y(t) + \int_0^t\frac{D(x,t) Y(t)\big(\theta(x,\tau)+\frac{\epsilon}{2}\frac{\phi^2_x(x,\tau)}{v(x,\tau)}\big)}{ D(x,\tau) Y(\tau)}d\tau\\
&\le& C+C\int_0^t\Big(\sup_{x\in\mathbb{R}} \theta (x,\tau)+\sup_{x\in\mathbb{R}}\big(\frac{\phi_x(x,\tau)}{v(x,\tau)}\big)^2\sup_{x\in\mathbb{R}}v(x,\tau)\Big)d\tau.\notag
\end{eqnarray}
Deriving from \eqref{NSFAC-Lagrange}$_5$,  we have
\begin{equation}\label{second derivative form for phi}
 \epsilon\Big(\frac{\phi_x}{v}\Big)_x=-\mu+\frac{1}{\epsilon}(\phi^3-\phi),
\end{equation}
and then from \eqref{basic energy inequality}, \eqref{conservation},\eqref{bound of density}, \eqref{lower bound of phi}, we obtain
\begin{equation}\label{estimate for second derivative form of phi}
\int_{-\infty}^{+\infty}\Big(\frac{\phi_x}{v}\Big)_x^2\frac{v}{\theta}dx\leq C\big(1+V(t)\big).
\end{equation}
Using \eqref{basic energy inequality},\eqref{conservation},\eqref{the lower bound of v},\eqref{lower bound of phi}, \eqref{estimate for second derivative form of phi}, we get
\begin{eqnarray}\label{The square modulus of the first derivative for phi}
 \sup_{x\in\mathbb{R}}\big(\frac{\phi_x}{v}\big)^2(x,t)& \leq& C\int_{-\infty}^{+\infty}\frac{\phi_x}{v}\Big(\frac{\phi_x}{v}\Big)_xdx\notag\\
 &\leq&C\int_{-\infty}^{+\infty}\frac{\theta}{v^2}\frac{\phi_x^2}{v}dx+\int_{-\infty}^{+\infty}\Big(\frac{\phi_x}{v}\Big)_x^2\frac{v}{\theta}dx\\
  &\leq& C\big(\sup_{x\in\mathbb{R}}\theta+1+V(t)\big).\notag
\end{eqnarray}
 Substituting \eqref{The square modulus of the first derivative for phi} into \eqref{the upper bound estimate for v 1}, we achieve
 \begin{eqnarray}\label{the upper bound estimate for v}
  v(x,t)\leq C+C\int_0^t\big(\sup_{x\in\mathbb{R}} \theta+1+V(t)\big)\sup_{x\in\mathbb{R}} v(x,\tau)d\tau.
\end{eqnarray}
Applying the Gronwall inequality to the above \eqref{the upper bound estimate for v}, combining with Lemma 2.2 and Lemma 2.5, we get
\begin{equation}\label{the upper bound of v}
 v(x,t)\leq C,\qquad\forall (x,t)\in (-\infty.+\infty)\times [0,T].
\end{equation}
The proof of Lemma \ref{the upper bounds of density 1} is finished.
\end{proof}

\begin{lemma}
Let $(v,u,\theta,\phi)$ be a smooth solution of \eqref{NSFAC-Lagrange}-\eqref{initial condition} on $(-\infty,+\infty)\times [0,T]$,  then for $\forall(x,t)(-\infty,+\infty)\times [0,T]$, it holds that
 \begin{equation}\label{ux2andthetax2}
\int_0^T\int_{-\infty}^{+\infty}\big(u_x^2+\frac{\theta_x^2}\theta \big)dxdt\leq C,
 \end{equation}
 where the positive constant $C$ only depends on $T$, the initial data $v_0,u_0,\theta_0,\chi_0$ and $\epsilon,h$.
 \end{lemma}
 \begin{proof}
 Multiplying \eqref{NSFAC-Lagrange}$_2$ by $u$ and integrating the resultant with respect of $x$ over $\mathbb{R}$, by using \eqref{basic energy inequality}, \eqref{E0},\eqref{bound of density}, \eqref{The square modulus of the first derivative for phi} and \eqref{integral sup theta}, one has
 \begin{equation}\label{thetax2}
 \left.\begin{array}{llll}
 \displaystyle \frac12\frac{d}{dt}\int_{-\infty}^{+\infty}u^2dx+\int_{-\infty}^{+\infty}\frac{u_x^2}vdx\\
 \displaystyle=\int_{-\infty}^{+\infty}\frac{\theta}{v}u_xdx+\frac\epsilon2\int_{-\infty}^{+\infty}\frac{\chi_x^2}{v^2}u_xdx\\
\displaystyle \leq C\int_{-\infty}^{+\infty}\frac{|\theta-1|}{v}|u_x|dx+C\int_{-\infty}^{+\infty}\frac{|v-1|}{v}|u_x|dx +\int_{-\infty}^{+\infty}\frac{\chi_x^4}{v^3}dx+\frac14\int_{-\infty}^{+\infty}\frac{u_x^2}vdx\\ 
\displaystyle \leq C\int_{-\infty}^{+\infty}(\theta-1)^2dx+C\int_{-\infty}^{+\infty}(v-1)^2dx+\int_{-\infty}^{+\infty}\frac{\chi_x^4}{v^3}dx+\frac12\int_{-\infty}^{+\infty}\frac{u_x^2}vdx\\
\displaystyle\leq C+C\int_{(\theta>2)(t)}\theta^2dx+C\big(\sup_{x\in\mathbb{R}} \theta+1+V(t)\big)+\frac12\int_{-\infty}^{+\infty}\frac{u_x^2}vdx\\
\displaystyle\leq C+C\big(\sup_{x\in\mathbb{R}} \theta+V(t)\big)+\frac12\int_{-\infty}^{+\infty}\frac{u_x^2}vdx.
 \end{array}
 \right.
 \end{equation}
 Integrating \eqref{thetax2} over $[0,T]$, combining with \eqref{basic energy inequality} and \eqref{integral sup theta}, one gets
 
 \begin{equation}\label{ux2}
   \int_0^T\int_{-\infty}^{+\infty}u_x^2dxdt\leq C,
 \end{equation}
 further, combining with \eqref{integral sup theta 2} and \eqref{bound of density}, one obtains
  \begin{equation}\label{thetax2}
   \int_0^T\int_{-\infty}^{+\infty}\frac{\theta_x^2}{\theta}dxdt\leq 
  C\int_0^T\int_{-\infty}^{+\infty}\frac{\theta^\beta\theta_x^2}{v\theta^{2-2\eta}}\theta^{1-2\eta-\beta}dxdt\leq C.
 \end{equation}
 The proof of Lemma 2.7 is completed.
 \end{proof}

\begin{lemma}\label{the square modulus estimate for density}
Let $(v,u,\theta,\phi)$ be a smooth solution of \eqref{NSFAC-Lagrange}-\eqref{initial condition} on $(-\infty,+\infty)\times [0,T]$,  then for $\forall(x,t)(-\infty,+\infty)\times [0,T]$, the following inequalities hold
 \begin{equation}\label{bound of the square modulus of the first derivative}
\sup_{0\leq t\leq T}\int_{-\infty}^{+\infty}v_x^2dx\leq C,\qquad \int_0^T\int_{-\infty}^{+\infty}\Big((\phi^2-1)_x^2+\phi_{xx}^2+\phi_t^2\Big)dxdt\leq C.
 \end{equation}
\end{lemma}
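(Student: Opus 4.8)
The three bounds are interdependent (the control of $\phi_{xx}$ will require control of $v_x$, and the $v_x$--estimate will produce a term containing $\phi_{xx}$), so the plan is to prove them together from one weighted energy inequality and then read off the last two by direct substitution. Set
\[
\mathcal E(t):=\frac12\!\int_{-\infty}^{+\infty}\!\!\Big(u-\frac{v_x}{v}\Big)^2\!dx+\frac{\epsilon}{2}\!\int_{-\infty}^{+\infty}\!\!\frac{\phi_x^2}{v}dx+\frac1{4\epsilon}\!\int_{-\infty}^{+\infty}\!\!(\phi^2-1)^2dx+\delta\!\int_{-\infty}^{+\infty}\!\!\phi_x^2dx ,
\]
with $\delta>0$ a small constant to be fixed; by Lemma~\ref{Fundamental energy inequality} and $C^{-1}\le v\le C$ (Lemmas~\ref{the lower bounds of density and temperature}--\ref{the upper bounds of density 1}) one has $C^{-1}\!\int v_x^2dx-C\le\mathcal E(t)\le C\!\int v_x^2dx+C$, so it suffices to bound $\sup_{0\le t\le T}\mathcal E(t)$ together with $\int_0^T\!\int\phi_{xx}^2\,dxdt$.

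First I would rewrite the momentum equation: since $u_x=v_t$ by \eqref{NSFAC-Lagrange}$_1$, $(u_x/v)_x=((\ln v)_t)_x=(v_x/v)_t$, so \eqref{NSFAC-Lagrange}$_2$ reads $(u-v_x/v)_t=-\big(\tfrac\theta v+\tfrac\epsilon2\tfrac{\phi_x^2}{v^2}\big)_x$. Multiplying by $u-v_x/v$ and integrating by parts, the contribution of $u_x\cdot\tfrac\epsilon2\phi_x^2/v^2$ equals $\tfrac\epsilon2\int v_t\phi_x^2/v^2\,dx$, which — using \eqref{NSFAC-Lagrange}$_3$, \eqref{NSFAC-Lagrange}$_4$ and $(\phi^3-\phi)\phi_t=\tfrac14\partial_t(\phi^2-1)^2$ — equals $-\tfrac\epsilon2\tfrac{d}{dt}\!\int\phi_x^2/v\,dx-\tfrac1{4\epsilon}\tfrac{d}{dt}\!\int(\phi^2-1)^2dx-\int v\mu^2dx$; absorbing the time derivatives into $\mathcal E$ leaves
\[
\frac{d}{dt}\mathcal E_0+\int\frac{\theta v_x^2}{v^3}dx+\epsilon\!\int\frac{v_x^2\phi_x^2}{v^4}dx+\int v\mu^2dx=\int\frac{\theta u_x}{v}dx+\int\frac{v_x\theta_x}{v^2}dx+\epsilon\!\int\frac{v_x\phi_x\phi_{xx}}{v^3}dx ,
\]
$\mathcal E_0$ being $\mathcal E$ without the $\delta$--term, with $\int\theta v_x^2/v^3dx\ge C^{-1}\!\int v_x^2dx$ by $\theta\ge C^{-1}$, $v\le C$. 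Secondly, inserting \eqref{NSFAC-Lagrange}$_4$ into \eqref{NSFAC-Lagrange}$_3$ gives the parabolic equation $\phi_t=\epsilon\phi_{xx}-\tfrac v\epsilon(\phi^3-\phi)-\epsilon v_x\phi_x/v$; multiplying by $-\phi_{xx}$ and using $-\int\phi_t\phi_{xx}dx=\tfrac12\tfrac d{dt}\!\int\phi_x^2dx$ gives $\tfrac12\tfrac d{dt}\!\int\phi_x^2dx+\epsilon\!\int\phi_{xx}^2dx=\tfrac1\epsilon\!\int v(\phi^3-\phi)\phi_{xx}dx+\epsilon\!\int(v_x\phi_x/v)\phi_{xx}dx$. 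Adding $\delta$ times this to the displayed identity supplies the dissipation $\delta\epsilon\!\int\phi_{xx}^2dx$.

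It then remains to bound each right-hand term in the form $g(t)\mathcal E(t)+h(t)$ with $\int_0^T(g+h)\,dt\le C$, and apply Gronwall. The cross terms $\int v_x\phi_x\phi_{xx}/v^3dx$ and $\int v_x\phi_x\phi_{xx}/v\,dx$ are split by Young's inequality, the $\phi_{xx}^2$--part going into $\delta\epsilon\!\int\phi_{xx}^2dx$ and the remainder being $\le C\|\phi_x\|_{L^\infty}^2\!\int v_x^2dx$; since $\|\phi_x\|_{L^\infty}^2\le C\sup_x(\phi_x/v)^2\le C\big(\sup_x(\theta-1)+1+V(t)\big)$ by \eqref{The square modulus of the first derivative for phi} and $v\le C$, while $\int_0^T\big(\sup_x(\theta-1)+V(t)\big)dt\le C$ by Lemmas~\ref{Fundamental energy inequality} and~\ref{the upper bounds of density 1}, this is admissible. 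Further $\int v(\phi^3-\phi)\phi_{xx}dx\le\tfrac{\delta\epsilon}4\!\int\phi_{xx}^2dx+C\!\int(\phi^2-1)^2dx$ with $\sup_t\!\int(\phi^2-1)^2dx\le C$, and the terms $\int v_x^2\phi_x^2/v^4dx$, $\int v\mu^2dx$ on the left are nonnegative. The delicate terms are $\int\theta u_x/v\,dx$ and $\int v_x\theta_x/v^2\,dx$: for the former I would write $\theta=(\theta-1)+1$, use $\int u_x\,dx=0$ (as $u(\cdot,t)\in H^1$ vanishes at infinity), Cauchy--Schwarz against the dissipation $\int u_x^2/(v\theta)dx\le V(t)$, and the bound $\int(v-1)^2dx+\int(\theta-1)^2dx\le C\big(1+\sup_x(\theta-1)\big)$ (which follows from $v-\ln v-1,\theta-\ln\theta-1\in L^1$ and $C^{-1}\le v\le C$); for the latter, $\int v_x\theta_x/v^2dx\le\tfrac12\!\int\theta v_x^2/v^3dx+\tfrac12\!\int\theta_x^2/(v\theta)dx$ and $\int\theta_x^2/(v\theta)dx$ is controlled via $\theta^{-1}=\theta^{\beta-2}\theta^{1-\beta}$, against $V(t)$ when $\beta\ge1$ and against the weighted estimate of Lemma~\ref{the lower bounds of density and temperature} (with a suitable $\alpha<\beta$) when $\beta<1$. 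Invoking $\int_0^T\sup_x(\theta-1)^2dt\le C$ (cf.\ \eqref{max theta^2}) then closes the Gronwall loop, yielding $\sup_{0\le t\le T}\!\int v_x^2dx\le C$, $\sup_{0\le t\le T}\!\int\phi_x^2dx\le C$ and $\int_0^T\!\int\phi_{xx}^2dxdt\le C$. I expect the bookkeeping of the powers of $\theta$ in these last two terms — pairing each against the available dissipation and the degeneracy exponent $\beta$, particularly in the small--$\beta$ regime — to be the main technical obstacle.

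The remaining two assertions then follow at once: $(\phi^2-1)_x=2\phi\phi_x$ and $|\phi|\le C$ give $\int_0^T\!\int(\phi^2-1)_x^2dxdt\le C\int_0^T\!\int\phi_x^2dxdt\le CT$; and from the parabolic equation $\phi_t^2\le C\phi_{xx}^2+C(\phi^2-1)^2+C\phi_x^2v_x^2$, so that $\int_0^T\!\int\phi_t^2dxdt\le C\int_0^T\!\int\phi_{xx}^2dxdt+CT+C\int_0^T\|\phi_x\|_{L^\infty}^2\Big(\int v_x^2dx\Big)dt\le C$, using $\sup_t\!\int v_x^2dx\le C$ and $\int_0^T\|\phi_x\|_{L^\infty}^2dt\le C$.
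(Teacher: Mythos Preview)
Your overall strategy is sound and genuinely different from the paper's. The paper keeps the multiplier $u-v_x/v$ intact (no integration by parts), bounds the phase-field term through $(\phi_x/v)_x=\epsilon^{-1}\big(\epsilon^{-1}(\phi^3-\phi)-\mu\big)$ after first proving $\int_0^T\!\int(u_x^2+v^2\mu^2)\,dxdt\le C$, closes Gronwall for $v_x$ alone, and only afterwards multiplies the Allen--Cahn equation by $\phi_{xx}$. You instead integrate by parts, exploit the structural identity
\[
\frac{\epsilon}{2}\!\int\frac{v_t\phi_x^2}{v^2}\,dx=-\frac{\epsilon}{2}\frac{d}{dt}\!\int\frac{\phi_x^2}{v}\,dx-\frac{1}{4\epsilon}\frac{d}{dt}\!\int(\phi^2-1)^2dx-\!\int v\mu^2dx,
\]
and couple the $v_x$-- and $\phi_{xx}$--estimates into one Gronwall loop. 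This is a nice observation: it produces the dissipation $\int v\mu^2\,dx$ for free and avoids the intermediate step \eqref{First derivative squared modulus} that the paper needs. Both routes ultimately rely on \eqref{The square modulus of the first derivative for phi} and on $\int_0^T\sup_x(\theta-1)^2dt\le C$; note that the latter estimate \eqref{max theta^2} is actually established \emph{inside} the paper's proof of this lemma, but its derivation uses only Lemmas~\ref{Fundamental energy inequality}--\ref{the upper bounds of density 1}, so citing it is not circular.

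There is one genuine gap. For the term $\int v_x\theta_x/v^2\,dx$ in the regime $\beta<1$ you appeal to the weighted estimate of Lemma~\ref{the lower bounds of density and temperature}, but that bound carries the cut-off $\varphi_n$ and gives only $\int_0^T\!\int_{\mathbb R}\theta^\beta\theta_x^2/(v\theta^{1+\alpha})\,\varphi_n\,dxdt\le C$ for each fixed $n$; it does \emph{not} yield a global bound on $\int_0^T\!\int_{\mathbb R}\theta_x^2/(v\theta)\,dxdt$. The remedy is to weight the Cauchy split differently, as the paper does in \eqref{I2}:
\[
\Big|\int\frac{v_x\theta_x}{v^2}\,dx\Big|\le \int\frac{\theta^\beta\theta_x^2}{v\theta^2}\,dx+C\!\int\frac{\theta^{2-\beta}v_x^2}{v^3}\,dx\le V(t)+C\big(1+\sup_x(\theta-1)^2\big)\!\int v_x^2\,dx,
\]
using $\theta^{2-\beta}\le C(1+(\theta-1)^2)$ for $\theta\ge C^{-1}$ and all $\beta>0$. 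This places the whole $\theta$--dependence into the Gronwall factor and works uniformly in $\beta$; with this change your argument closes.
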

\begin{proof} Firstly, we rewrite  \eqref{NSFAC-Lagrange}$_2$ as following
\begin{equation}\label{Another form of the momentum equation}
 \Big(u-\frac{v_x}{v}\Big)_t=-\Big(\frac{\theta}{v}+\frac\epsilon2\big(\frac{\phi_x}{v}\big)^2\Big)_x
\end{equation}
Multiplying \eqref{Another form of the momentum equation} by $u-\frac{v_x}{v}$, integrating by parts over $(-\infty,+\infty)\times [0,T]$, we have
\begin{equation}\label{the square modulus estimate for density-1}
\left.\begin{array}{llll}
\displaystyle\frac12\int_{-\infty}^{+\infty}\Big(u-\frac{v_x}{v}\Big)^2(x,t)dx-\frac12\int_{-\infty}^{+\infty}\Big(u_0-\frac{v_x}{v}(x,0)\Big)^2dx \notag\\
\displaystyle=\int_0^T\int_{-\infty}^{+\infty}\Big(\frac{\theta v_x}{v^2}-\frac{\theta_x}{v}-\epsilon\frac{\phi_x}{v}\big(\frac{\phi_x}{v}\big)_x\Big)\Big(u-\frac{v_x}{v}\Big)dxdt\notag \\
\displaystyle=-\int_0^T\int_{-\infty}^{+\infty}\frac{\theta v_x^2}{v^3}dxdt+\int_0^T\int_{-\infty}^{+\infty}\frac{\theta u v_x}{v^2}dxdt\\
\displaystyle\ \ \ \ \ -\int_0^T\int_{-\infty}^{+\infty}\frac{\theta_x}{v}\Big(u-\frac{v_x}{v}\Big)dxdt
-\int_0^T\int_{-\infty}^{+\infty}\epsilon\frac{\phi_x}{v}\big(\frac{\phi_x}{v}\big)_x\Big(u-\frac{v_x}{v}\Big)dxdt.\notag
  \end{array}
  \right.
  \end{equation}
Now we give the last three terms on the right side of \eqref{the square modulus estimate for density-1}. First, by using \eqref{basic energy inequality}, \eqref{the lower bound of v}, \eqref{bound of density} and \eqref{the upper bound of v},  we have
\begin{equation}\label{I1}
\left.\begin{array}{llll}
\displaystyle \Big|\int_0^T\int_{-\infty}^{+\infty}\frac{\theta u v_x}{v^2}dxdt\Big| &\leq&\displaystyle\frac{1}{2}\int_0^T\int_{-\infty}^{+\infty}\frac{\theta v_x^2}{v^3}dxdt+\frac{1}{2}\int_0^T\int_{-\infty}^{+\infty}\frac{u^2\theta}{v}dxdt\\
\displaystyle &\leq&\displaystyle\frac{1}{2}\int_0^T\int_{-\infty}^{+\infty}\frac{\theta v_x^2}{v^3}dxdt+C\int_0^T\sup_{x\in\mathbb{R}}\theta dt\\
 \displaystyle&\leq&\displaystyle\frac{1}{2}\int_0^T\int_{-\infty}^{+\infty}\frac{\theta v_x^2}{v^3}dxdt+C.
 \end{array}\right.
\end{equation}
Next, by using \eqref{basic energy inequality} and \eqref{bound of density} we obtain
\begin{equation}\label{I2}
\left.\begin{array}{llll}
\displaystyle\Big| \int_0^T\int_{-\infty}^{+\infty}\frac{\theta_x}{v}\Big(u-\frac{v_x}{v}\Big)dxdt\Big|\\
\displaystyle\leq C\int_0^T\int_{-\infty}^{+\infty}\frac{\theta_x^2}{\theta}dxd\tau+C \int_0^T\int_{-\infty}^{+\infty}\frac{\theta}{v^2}\Big(u-\frac{v_x}{v}\Big)^2dxd\tau\\
\displaystyle\leq C+C\int_0^T\sup_{x\in\mathbb{R}}\theta\int_{-\infty}^{+\infty}\Big(u-\frac{v_x}{v}\Big)^2dxd\tau.
\end{array}
\right.
\end{equation}
Furthermore, from \eqref{bound of density}, \eqref{estimate for second derivative form of phi}, \eqref{the upper bound of v}, we have
\begin{eqnarray}\label{I3}
% \nonumber to remove numbering (before each equation)
 &&\Big|\int_0^T\int_{-\infty}^{+\infty}\epsilon\frac{\phi_x}{v}\big(\frac{\phi_x}{v}\big)_x\Big(u-\frac{v_x}{v}\Big)dxdt\Big|\notag\\
 &&\leq C\int_0^T\int_{-\infty}^{+\infty}\Big(\big|(\frac{\phi_x}{v})_x\big|^2+\big|\frac{\phi_x}{v}\big|^2\big(u-\frac{v_x}{v}\big)^2\Big)dxdt\notag\\
 && \leq C\int_0^T\int_{-\infty}^{+\infty}\phi^2_{xx}dxdt+C\int_0^T\big(\sup_{x\in\mathbb{R}}\theta+1+V(t)\big)\int_{-\infty}^{+\infty}v_{x}^2dxdxdt\\
 &&\qquad+
 C\int_0^T\sup_{x\in\mathbb{R}}\big|\frac{\phi_x}{v}\big|^2\int_{-\infty}^{+\infty}\big(u-\frac{v_x}{v}\big)^2dxdt.\notag 
\end{eqnarray}
Secondly, rewriting  \eqref{NSFAC-Lagrange}$_{3,4}$ as follows
\begin{equation}\label{Allen-Cahn-Lagrange}
 \phi_t-\epsilon\phi_{xx}=-\epsilon\frac{\phi_xv_x}{v}-\frac{v}{\epsilon}\big(\phi^3-\phi\big).
\end{equation}
Multiplying \eqref{Allen-Cahn-Lagrange} by $\phi_{xx}$, integrating the resultant over $(-\infty,+\infty)$, with respect to $x$,  combining with \eqref{bound of density}, \eqref{bound of phi and v}, \eqref{The square modulus of the first derivative for phi} and \eqref{lower bound of phi},  we obtain
\begin{align}
&\quad\frac{1}{2}\frac{d}{dt}\int_{-\infty}^{+\infty}\phi_x^2dx+\epsilon\int_{-\infty}^{+\infty}\phi_{xx}^2dx+\frac{1}{2\epsilon}\int_{-\infty}^{+\infty}(\phi^2-1)_x^2dx\notag\\
&=\epsilon\int_{-\infty}^{+\infty}\frac{\phi_xv_x}{v}\phi_{xx}dx+ \frac{1}{\epsilon}\int_{-\infty}^{+\infty}(1-\phi^2)\phi_x^2dx\notag\\
& \leq C\Big(\int_{-\infty}^{+\infty}\phi_x^2v_x^2dx+
%\int_{-\infty}^{+\infty}\phi^2(\phi^2-1)^2dx
\int_{-\infty}^{+\infty}\phi_x^2dx\Big)+\frac{\epsilon}{2}\int_{-\infty}^{+\infty}\phi_{xx}^2dx\\
&\leq  C\Big(\sup_{x\in\mathbb{R}}\phi_x^2(x,t)\int_{-\infty}^{+\infty}v_{x}^2dx
+%\int_{-\infty}^{+\infty}(\phi^2-1)^2dx
\int_{-\infty}^{+\infty}\phi_x^2dx\Big)+\frac{\epsilon}{2}\int_{-\infty}^{+\infty}\phi_{xx}^2dx\notag\\
&\leq   C\Big(\big(\sup_{x\in\mathbb{R}}\theta+1+V(t)\big)\int_{-\infty}^{+\infty}v_{x}^2dx+
\int_{-\infty}^{+\infty}\phi_x^2dx\Big)+\frac{\epsilon}{2}\int_{-\infty}^{+\infty}\phi_{xx}^2dx,\notag
\end{align}
combining with \eqref{basic energy inequality}, \eqref{bound of density}, \eqref{bound of phi and v}, \eqref{I1}, \eqref{I2}, \eqref{I3}, \eqref{the square modulus estimate for density-1}, from \eqref{The square modulus of the first derivative for phi}, \eqref{max theta^2} and Gronwall's inequality, we obtain
\begin{equation}\label{estimate for v-x^2}
\sup_{0\leq t\leq T} \int_{-\infty}^{+\infty}(v_x^2+\phi_x^2)dx+\int_0^T\int_{-\infty}^{+\infty}\Big((\phi^2-1)_x^2+\phi_{xx}^2+\frac{\theta v_x^2}{v^3}\Big)dxdt\leq C.
\end{equation}

Finally,  from \eqref{Allen-Cahn-Lagrange}, we have
\begin{equation}\label{Allen-Cahn-Lagrange-0}
 \phi_t=\epsilon\phi_{xx}-\epsilon\frac{\phi_xv_x}{v}-\frac{v}{\epsilon}\big(\phi^3-\phi\big),
\end{equation}
integrating \eqref{Allen-Cahn-Lagrange-0} over $(-\infty,+\infty)$, we obtain
\begin{eqnarray}\label{estimate of phi-xx-l2}
 \int_{-\infty}^{+\infty}\phi_t^2dx&\leq&C\Big(\int_{-\infty}^{+\infty}\phi_{xx}^2dx+\int_{-\infty}^{+\infty}\phi_x^2v_x^2dx+\int_{-\infty}^{+\infty}\big(\phi^3-\phi\big)^2dx\Big)\notag\\
 &\leq&C\Big(\int_{-\infty}^{+\infty}\phi_{xx}^2dx+\int_{-\infty}^{+\infty}v_x^2dx\int_{-\infty}^{+\infty}\phi_{xx}^2dx+1\Big)\\
 &\leq&C\Big(\int_{-\infty}^{+\infty}\phi_{xx}^2dx+1\Big),\notag
\end{eqnarray}
then, from \eqref{estimate for v-x^2}, we achieve
\begin{equation}\label{phi-t-L2}
  \int_0^T\int_{-\infty}^{+\infty}\phi_t^2dxdt\leq C.
\end{equation}
The proof of Lemma \ref{the square modulus estimate for density} is completed.
\end{proof}

\begin{lemma}\label{phi-xxx}
Let $(v,u,\theta,\phi)$ be a smooth solution of \eqref{NSFAC-Lagrange}-\eqref{initial condition} on $(-\infty,+\infty)\times [0,T]$, then  for $\forall(x,t)\in(-\infty,+\infty)\times [0,T]$, the following inequality holds
 \begin{equation}\label{bound of the square modulus of the third derivative for phi}
\sup_{0\leq t\leq T}\int_{-\infty}^{+\infty}\phi_{xx}^2dx+\int_0^T\int_{-\infty}^{+\infty}\big(\phi_{xt}^2+\big(\frac{\phi_{x}}{v}\big)_{xx}^2\big)dxdt\leq C.
 \end{equation}
\end{lemma}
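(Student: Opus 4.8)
The plan is to obtain the bounds for $\phi_{xx}$ and $\phi_{xt}$ from the parabolic form \eqref{Allen-Cahn-Lagrange} of the phase‑field equation by an energy estimate at the second‑derivative level, and then to read off the bound for $(\phi_x/v)_{xx}$ directly from \eqref{NSFAC-Lagrange}$_4$ together with $\mu=-\phi_t/v$. Writing \eqref{Allen-Cahn-Lagrange} as $\phi_t-\epsilon\phi_{xx}=F$ with $F:=-\epsilon\phi_xv_x/v-v(\phi^3-\phi)/\epsilon$, I would differentiate once in $x$ and test with $\phi_{xt}$ (equivalently, multiply \eqref{Allen-Cahn-Lagrange} by $-\phi_{xxt}$ and integrate by parts), which produces
\begin{equation}\label{phixx-identity}
\frac{\epsilon}{2}\frac{d}{dt}\int_{-\infty}^{+\infty}\phi_{xx}^2\,dx+\int_{-\infty}^{+\infty}\phi_{xt}^2\,dx=\int_{-\infty}^{+\infty}F_x\,\phi_{xt}\,dx.
\end{equation}
Since $F_x$ carries $v_{xx}$ (and, after differentiating $F$ in $t$, $u_{xx}$), the right‑hand side must be reorganized so that these second‑order derivatives of the hydrodynamic fields are always shifted onto the phase field by integration by parts before any norm is taken; one convenient way is to rewrite $\int F_x\phi_{xt}\,dx=-\frac{d}{dt}\int F\phi_{xx}\,dx+\int F_t\phi_{xx}\,dx$, integrate \eqref{phixx-identity} in time, and absorb the endpoint term $\int F\phi_{xx}\,dx|_{t=T}$ into $\tfrac{\epsilon}{4}\|\phi_{xx}(T)\|_{L^2}^2$ after using $\|F\|_{L^2}^2\le C\sup_x(\phi_x/v)^2\|v_x\|_{L^2}^2+C$ (from \eqref{basic energy inequality}, \eqref{bound of density}, \eqref{bound of phi and v}) and the one‑dimensional Sobolev bound $\sup_x\phi_x^2\le C\|\phi_x\|_{L^2}\|\phi_{xx}\|_{L^2}$ together with $\|\phi_x(T)\|_{L^2},\|v_x(T)\|_{L^2}\le C$.

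It then remains to control $\int_0^T\int F_t\phi_{xx}\,dx\,dt$. Using $v_t=u_x$ one finds $F_t=-\epsilon\big(\phi_{xt}v_x/v-u_x\phi_{xx}/v\big)-\epsilon\,\partial_x(\phi_xu_x/v)-\tfrac1\epsilon u_x(\phi^3-\phi)-\tfrac1\epsilon v(3\phi^2-1)\phi_t$, where the genuinely singular $u_xv_x/v^2$ contributions have cancelled; each resulting integral against $\phi_{xx}$ (after one further integration by parts in $x$ in the exact‑derivative term) is then estimated by the a priori bounds already available — $\sup_t\|v_x\|_{L^2}^2\le C$, $\sup_t\|\phi_x\|_{L^2}^2\le C$ (Lemma \ref{the square modulus estimate for density}), $\int_0^T\|u_x\|_{L^2}^2\,dt\le C$, $\int_0^T\|\phi_{xx}\|_{L^2}^2\,dt\le C$, $\int_0^T\|\phi_t\|_{L^2}^2\,dt\le C$ (\eqref{First derivative squared modulus}, \eqref{bound of the square modulus of the first derivative}, \eqref{phi-t-L2}), the pointwise bounds \eqref{bound of density}, \eqref{bound of phi and v}, \eqref{The square modulus of the first derivative for phi}, and $\int_0^T\sup_x(\theta-1)^2\,dt\le C$ (\eqref{max theta^2}) — combined with the interpolations $\sup_x g^2\le C\|g\|_{L^2}\|g_x\|_{L^2}$ and $\|g\|_{L^4}^2\le C\|g\|_{L^2}^{3/2}\|g_x\|_{L^2}^{1/2}$ to convert $L^\infty$ and $L^4$ factors into $L^2$ ones, and Young's inequality to feed small multiples of $\|\phi_{xt}\|_{L^2}^2$ (and, for the most delicate bilinear terms, of the third‑order quantity $\|(\phi_x/v)_{xx}\|_{L^2}^2$, which is treated in the last step so that the two estimates are closed simultaneously) back onto the left‑hand side. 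Gronwall's inequality then yields $\sup_{0\le t\le T}\|\phi_{xx}\|_{L^2}^2+\int_0^T\|\phi_{xt}\|_{L^2}^2\,dt\le C$.

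Finally, differentiating \eqref{NSFAC-Lagrange}$_4$ in $x$ and using $\mu=-\phi_t/v$ gives
\begin{equation*}
\epsilon\Big(\frac{\phi_x}{v}\Big)_{xx}=\frac{\phi_{xt}}{v}-\frac{\phi_tv_x}{v^2}+\frac{1}{\epsilon}(3\phi^2-1)\phi_x,
\end{equation*}
so squaring and integrating over $(-\infty,+\infty)\times(0,T)$ closes the argument: the first term is handled by $\int_0^T\|\phi_{xt}\|_{L^2}^2\,dt\le C$; the second by $\int_0^T\int\phi_t^2v_x^2/v^4\,dx\,dt\le C\int_0^T\sup_x\phi_t^2\,dt\le C\big(\int_0^T\|\phi_t\|_{L^2}^2\,dt+\int_0^T\|\phi_{xt}\|_{L^2}^2\,dt\big)\le C$ (via $\sup_t\|v_x\|_{L^2}^2\le C$ and $\sup_x\phi_t^2\le C\|\phi_t\|_{L^2}\|\phi_{xt}\|_{L^2}$); and the third by $\int_0^T\int(3\phi^2-1)^2\phi_x^2\,dx\,dt\le C\int_0^T\|\phi_x\|_{L^2}^2\,dt\le C$ (via $|\phi|\le C$ and $\sup_t\|\phi_x\|_{L^2}^2\le C$). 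The step I expect to be the main obstacle is precisely the control of $\int_0^T\int F_t\phi_{xx}\,dx\,dt$: since no second‑order estimate for $v$ or $u$ is available at this stage, the computation must be arranged so that $v_{xx}$ and $u_{xx}$ never survive an integration by parts, and the bilinear terms that remain force $\|\phi_{xx}\|_{L^\infty(0,T;L^2)}$, $\|\phi_{xt}\|_{L^2(0,T;L^2)}$ and $\|(\phi_x/v)_{xx}\|_{L^2(0,T;L^2)}$ to be estimated together in a single Gronwall argument.
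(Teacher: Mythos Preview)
Your plan contains a genuine structural obstacle that the paper's proof sidesteps by a different choice of unknown. You work directly with $\phi$ and try to close an estimate on $\|\phi_{xx}\|_{L^2}$ via the identity \eqref{phixx-identity}. The trouble is the term $\int_0^T\int F_t\,\phi_{xx}\,dx\,dt$. Your decomposition of $F_t$ packages the $u_{xx}$ contribution as the exact derivative $-\epsilon\,\partial_x(\phi_x u_x/v)$, but when this is tested against $\phi_{xx}$ you are forced to choose: either expand and keep $-\epsilon\int \tfrac{\phi_x u_{xx}}{v}\phi_{xx}\,dx$, or integrate by parts and produce $\epsilon\int \tfrac{\phi_x u_x}{v}\phi_{xxx}\,dx$. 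Neither can be closed at this point of the argument. No $L^2_{t,x}$ bound on $u_{xx}$ is available before Lemma~\ref{velocity-x} (which in turn relies on the present lemma through \eqref{the upper and lower bounds of the derivative for phi}), and $\phi_{xxx}$ cannot be traded for $(\phi_x/v)_{xx}$ because
\[
\Big(\frac{\phi_x}{v}\Big)_{xx}=\frac{\phi_{xxx}}{v}-\frac{2\phi_{xx}v_x}{v^2}-\frac{\phi_x v_{xx}}{v^2}+\frac{2\phi_x v_x^2}{v^3},
\]
so the two differ by a $v_{xx}$ term that you also do not control. The same difficulty already appears in the first term of $F_t$: bounding $\int v_x^2\phi_{xx}^2\,dx$ with only $\sup_t\|v_x\|_{L^2}\le C$ again forces a factor of $\|\phi_{xxx}\|_{L^2}$ or $\|v_{xx}\|_{L^2}$ through interpolation. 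Thus your claim that ``$v_{xx}$ and $u_{xx}$ never survive an integration by parts'' is precisely the step that fails.

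The paper avoids this by taking $\psi:=\phi_x/v$ as the basic variable instead of $\phi_x$. Rewriting \eqref{NSFAC-Lagrange}$_{3,4}$ as $\phi_t/v-\epsilon(\phi_x/v)_x=-\tfrac1\epsilon(\phi^3-\phi)$ and differentiating in $x$ yields the clean parabolic equation
\[
\psi_t-\epsilon\,\psi_{xx}=-\tfrac1\epsilon(\phi^3-\phi)_x+\frac{\phi_t v_x}{v^2}-\frac{\phi_x u_x}{v^2},
\]
whose right-hand side contains \emph{no} $v_{xx}$ or $u_{xx}$. Multiplying by $\psi_t$ and integrating, every source term is controlled by the quantities already estimated in Lemmas~\ref{Fundamental energy inequality}--\ref{the square modulus estimate for density}, and Gronwall gives $\sup_t\|\psi_x\|_{L^2}^2+\int_0^T\|\psi_t\|_{L^2}^2\,dt\le C$. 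The bounds on $\phi_{xx}$, $\phi_{xt}$, and finally $(\phi_x/v)_{xx}$ then follow from $\phi_{xx}=v\psi_x+\psi v_x$, $\phi_{xt}=v\psi_t+\psi u_x$, and the differentiated equation itself. In short, the change of variable $\phi_x\mapsto\phi_x/v$ is the missing idea; once you adopt it, your last paragraph (reading off $(\phi_x/v)_{xx}$ from the equation) is exactly right and is how the paper finishes.
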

\begin{proof}
Rewriting  \eqref{Allen-Cahn-Lagrange} as
\begin{equation}\label{Allen-Cahn-Lagrange-1}
\frac{\phi_t}{v}-\epsilon\Big(\frac{\phi_x}{v}\Big)_{x}=-\frac{1}{\epsilon}(\phi^3-\phi),
\end{equation}
differentiating \eqref{Allen-Cahn-Lagrange-1} with respect to $x$, we obtain
\begin{equation}\label{Allen-Cahn-Lagrange-2}
 \Big(\frac{\phi_x}{v}\Big)_t-\epsilon\Big(\frac{\phi_x}{v}\Big)_{xx}=-\frac{1}{\epsilon}\big(\phi^3-\phi\big)_x+\frac{\phi_tv_x}{v^2}-\frac{\phi_x u_x}{v^2},
\end{equation}
multiplying \eqref{Allen-Cahn-Lagrange-2} by $\big(\frac{\phi_{x}}{v}\big)_t$, integrating the resultant over $(-\infty,+\infty)$, from \eqref{basic energy inequality}, \eqref{bound of density}, \eqref{bound of phi and v}, \eqref{bound of the square modulus of the first derivative},  \eqref{estimate of phi-xx-l2}, we have
\begin{equation*}
\left.\begin{array}{llll}
% \nonumber to remove numbering (before each equation)
\displaystyle\int_{-\infty}^{+\infty}\Big(\frac{\phi_{x}}{v}\Big)_t^2dx+ \frac{\epsilon}{2}\frac{d}{dt}\int_{-\infty}^{+\infty}\Big(\frac{\phi_x}{v}\Big)_{x}^2dx \\
\displaystyle =-\frac{1}{\epsilon}\int_{-\infty}^{+\infty}\big(\phi^3-\phi\big)_x\big(\frac{\phi_{x}}{v}\big)_tdx+\int_{-\infty}^{+\infty}\frac{\phi_tv_x}{v^2}
\big(\frac{\phi_{x}}{v}\big)_tdx-\int_{-\infty}^{+\infty}\frac{\phi_x u_x}{v^2}\big(\frac{\phi_{x}}{v}\big)_tdx\\
\displaystyle\leq C\Big(\int_{-\infty}^{+\infty}\big(3\phi^2-1\big)^2\phi_x^2dx+\int_{-\infty}^{+\infty}\phi_t^2v_x^2dx+\int_{-\infty}^{+\infty}\phi_x^2 u_x^2dx\Big)+\frac{1}{3}\int_{-\infty}^{+\infty}\big(\frac{\phi_{x}}{v}\big)_t^2dx \\
\displaystyle\leq C\Big(1+\|\phi_t\|_{L^\infty}^2\int_{-\infty}^{+\infty}v_x^2dx+\|\frac{\phi_{x}}{v}\|_{L^\infty}^2\int_{-\infty}^{+\infty}u_x^2dx\Big)
+\frac{1}{3}\int_{-\infty}^{+\infty}\big(\frac{\phi_{x}}{v}\big)_t^2dx\\
\displaystyle\leq C\Big(1+\int_{-\infty}^{+\infty}\big(\phi_t^2+2|\phi_t\phi_{xt}|\big)dx+\int_{-\infty}^{+\infty}\Big(\frac{\phi_x}{v}\Big)_{x}^2dx\int_{-\infty}^{+\infty}u_x^2dx\Big)
+\frac{1}{3}\int_{-\infty}^{+\infty}\big(\frac{\phi_{x}}{v}\big)_t^2dx\\
\displaystyle\leq C\Big(1+\int_{-\infty}^{+\infty}\phi_{xx}^2dx+\varepsilon\int_{-\infty}^{+\infty}\phi_{xt}^2dx+\int_{-\infty}^{+\infty}
\Big(\frac{\phi_x}{v}\Big)_{x}^2dx\int_{-\infty}^{+\infty}u_x^2dx\Big)
+\frac{1}{3}\int_{-\infty}^{+\infty}\big(\frac{\phi_{x}}{v}\big)_t^2dx\\
\displaystyle\leq C\Big(1+\int_{-\infty}^{+\infty}\phi_{xx}^2dx+\int_{-\infty}^{+\infty}u_x^2dx\int_{-\infty}^{+\infty}\Big(\frac{\phi_x}{v}\Big)_{x}^2dx\Big)
+\frac{1}{2}\int_{-\infty}^{+\infty}\big(\frac{\phi_{x}}{v}\big)_t^2dx,
\end{array}\right.
\end{equation*}
where in the last inequality  $\phi_{xt}=\big(\frac{\phi_x}{v}\big)_t v+\frac{\phi_x u_x}{v}$ is used. Therefore, from Gronwall's inequality, we get
\begin{equation}\label{higher derivative energy estimation for phi}
 \sup_{t\in[0,T]} \int_{-\infty}^{+\infty}\Big(\frac{\phi_x}{v}\Big)_{x}^2dx+\int_0^T\int_{-\infty}^{+\infty}\Big(\frac{\phi_{x}}{v}\Big)_t^2dxdt\leq C.
\end{equation}
 Combining with  \eqref{basic energy inequality}, \eqref{bound of the square modulus of the first derivative}, we  have
\begin{equation}
\sup_{0\leq t\leq T}\int_{-\infty}^{+\infty}\phi_{xx}^2dx+\int_0^T\int_{-\infty}^{+\infty}\phi_{xt}^2dxdt\leq C.
 \end{equation}
Furthermore,  using the Sobolev embedding theorem, follows from \eqref{basic energy inequality} and \eqref{higher derivative energy estimation for phi}, we obtain
\begin{equation}\label{the upper and lower bounds of the derivative for phi}
  \sup_{(x,t)\in\mathbb{R}\times[0,T]}\big|\frac{\phi_x}{v}\big|\leq \sqrt{2}\big\|\frac{\phi_x}{v}\big\|\Big\|\big(\frac{\phi_x}{v}\big)_{x}\Big\|\leq C.
\end{equation}
Moreover, from \eqref{Allen-Cahn-Lagrange-2} and the inequalities obtained above, we achieve
\begin{equation}
\int_0^T\int_{-\infty}^{+\infty}\big(\frac{\phi_{x}}{v}\big)_{xx}^2dxdt\leq C,
 \end{equation}
  the proof of Lemma \ref{phi-xxx} is finished.
\end{proof}

\begin{lemma}\label{velocity-x}
Let $(v,u,\theta,\phi)$ be a smooth solution of \eqref{NSFAC-Lagrange}-\eqref{initial condition} on $(-\infty,+\infty)\times [0,T]$, then  for $\forall(x,t)\in(-\infty,+\infty)\times [0,T]$, the following inequality holds
 \begin{equation}\label{energy estimate of velocity}
\sup_{0\leq t\leq T}\int_{-\infty}^{+\infty}u_{x}^2dx+\int_0^T\int_{-\infty}^{+\infty}\big(u_{t}^2+u^2_{xx}\big)dxdt\leq C.
 \end{equation}
\end{lemma}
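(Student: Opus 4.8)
The plan is to test the momentum equation \eqref{NSFAC-Lagrange}$_2$ against $u_t$. Using $v_t=u_x$, the viscous term rewrites as $\int_{-\infty}^{+\infty}(\frac{u_x}{v})_x u_t\,dx=-\frac12\frac{d}{dt}\int_{-\infty}^{+\infty}\frac{u_x^2}{v}\,dx-\frac12\int_{-\infty}^{+\infty}\frac{u_x^3}{v^2}\,dx$, so that
\begin{equation*}
\int_{-\infty}^{+\infty}u_t^2\,dx+\frac12\frac{d}{dt}\int_{-\infty}^{+\infty}\frac{u_x^2}{v}\,dx=-\frac12\int_{-\infty}^{+\infty}\frac{u_x^3}{v^2}\,dx-\int_{-\infty}^{+\infty}\Big(\frac{\theta}{v}\Big)_xu_t\,dx-\frac{\epsilon}{2}\int_{-\infty}^{+\infty}\Big(\frac{\phi_x^2}{v^2}\Big)_xu_t\,dx.
\end{equation*}
I would bound the right-hand side so that, after absorbing a fraction of $\int u_t^2\,dx$ (and of $\int u_{xx}^2\,dx$, via the elliptic identity $\frac{u_{xx}}{v}=u_t+(\frac{\theta}{v})_x+\frac{\epsilon}{2}(\frac{\phi_x^2}{v^2})_x+\frac{u_xv_x}{v^2}$ read off from \eqref{NSFAC-Lagrange}$_2$), the remainder is of the form $a(t)\int\frac{u_x^2}{v}\,dx+b(t)$ with $a,b\in L^1(0,T)$; then Gronwall's inequality gives $\sup_t\int u_x^2\,dx\le C$, and re-inserting this into the identity and into the elliptic relation for $u_{xx}$ yields $\int_0^T\int(u_t^2+u_{xx}^2)\,dx\,dt\le C$.

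The $\phi$-term is the mildest: writing $(\frac{\phi_x^2}{v^2})_x=2\frac{\phi_x}{v}(\frac{\phi_x}{v})_x$ and using $\sup|\frac{\phi_x}{v}|\le C$ with $\sup_t\|(\frac{\phi_x}{v})_x\|_{L^2}\le C$ from Lemma \ref{phi-xxx}, it is dominated by $\varepsilon\int u_t^2\,dx+C$. For the cubic term I would use the one-dimensional inequality $\|u_x\|_{L^\infty}^2\le 2\|u_x\|_{L^2}\|u_{xx}\|_{L^2}$ together with the upper/lower bounds on $v$, then Young's inequality and the elliptic relation for $u_{xx}$ to absorb the $u_{xx}$-contribution; the surviving power of $\|u_x\|_{L^2}$ is integrable in time against $\int_0^T\|u_x\|_{L^2}^2\,dt\le C$ from \eqref{First derivative squared modulus}. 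Splitting $(\frac{\theta}{v})_x=\frac{\theta_x}{v}-\frac{\theta v_x}{v^2}$, the part $\int\frac{\theta v_x}{v^2}u_t\,dx\le\varepsilon\int u_t^2\,dx+C\sup_x\theta^2\,\|v_x\|_{L^2}^2$ is controlled because $\|v_x\|_{L^2}\le C$ by Lemma \ref{the square modulus estimate for density} and $\int_0^T\sup_x\theta^2\,dt\le C$ by \eqref{max theta^2}.

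The delicate term is $-\int\frac{\theta_x}{v}u_t\,dx$. Because $\kappa(\theta)=\theta^\beta$ degenerates, the only temperature dissipation available at this stage is the weighted quantity $V(t)$ of \eqref{V(t)}, so $\int_0^T\|\theta_x\|_{L^2}^2\,dt$ is not yet known to be finite when $\beta$ is small and a crude $\varepsilon\|u_t\|^2+C\|(\frac{\theta}{v})_x\|^2$ bound is not admissible. Instead I would integrate by parts in $x$ to replace it by $\int\frac{\theta}{v}u_{xt}\,dx$ and then reorganise, using $u_x=v_t$ and the temperature equation \eqref{NSFAC-Lagrange}$_5$ to eliminate $\theta_t$, into the time derivative of a controlled quantity (handled with $u\in L^\infty(0,T;L^2)$ from \eqref{estimate for v-x^2}, $v_x\in L^\infty(0,T;L^2)$, and $\theta=(\theta-1)+1$) plus terms that pair with $\varepsilon\int u_{xx}^2\,dx$ and with $\int_0^TV(t)\,dt\le E_0$, the time-coefficients being rendered integrable by \eqref{theta13}, \eqref{max theta^2} and Young's inequality with suitably chosen exponents. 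I expect this degenerate pressure--conductivity coupling to be the main obstacle: the estimate for $u$ does not decouple cleanly from the temperature, and one has to carry the weighted dissipation $V(t)$ and the space--time bounds $\int_0^T\sup_x(\theta-1)\,dt$, $\int_0^T\sup_x(\theta-1)^2\,dt$ through the entire Gronwall argument rather than relying on an unweighted gradient bound for $\theta$.
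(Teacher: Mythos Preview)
Your choice of multiplier ($u_t$ rather than $u_{xx}$) is a minor variation; the real divergence from the paper is in how you propose to handle the pressure term. You correctly flag $-\int\frac{\theta_x}{v}u_t\,dx$ as the crux, but the integration-by-parts workaround does not close with what is available at this stage. After writing $\int\frac{\theta}{v}u_{xt}\,dx=\frac{d}{dt}\int\frac{\theta u_x}{v}\,dx-\int\frac{\theta_t u_x}{v}\,dx+\int\frac{\theta u_x^2}{v^2}\,dx$ and substituting $\theta_t$ from \eqref{NSFAC-Lagrange}$_5$, the heat--flux contribution gives $\int\frac{\theta^\beta\theta_x}{v}\big(\frac{u_x}{v}\big)_x\,dx$; any Young splitting of the resulting $\int\frac{\theta^\beta\theta_x u_{xx}}{v^2}\,dx$ that keeps a small multiple of $\int u_{xx}^2$ on one side forces $\int_0^T\!\!\int\theta^{2\beta}\theta_x^2\,dx\,dt$ on the other, which exceeds the dissipation in $V(t)$ by a factor $\theta^{\beta+2}$. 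Since only $\int_0^T\sup_x(\theta-1)\,dt$ and $\int_0^T\sup_x(\theta-1)^2\,dt$ are available (not higher powers), this term is uncontrolled for every $\beta>0$. The boundary term $\int\frac{\theta u_x}{v}\,dx$ at a fixed time is likewise problematic: it needs $\|\theta-1\|_{L^\infty_tL^2_x}$, which you do not yet have.

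The paper's route avoids all of this by \emph{coupling} the momentum estimate with a temperature estimate rather than trying to dodge the unweighted $\theta_x$ bound. It tests \eqref{NSFAC-Lagrange}$_2$ with $u_{xx}$, producing $\tfrac12\frac{d}{dt}\|u_x\|_{L^2}^2+\int\frac{u_{xx}^2}{v}\,dx$ on the left and $C\int\theta_x^2\,dx$ among the right-hand terms, and simultaneously tests \eqref{NSFAC-Lagrange}$_5$ with $\theta-1$ to obtain
\[
\frac12\frac{d}{dt}\|\theta-1\|_{L^2}^2+\int\frac{\theta^\beta\theta_x^2}{v}\,dx\le C(\delta)+C\delta\int\frac{u_{xx}^2}{v}\,dx.
\]
Because $\theta\ge C^{-1}$ (Lemma~\ref{the lower bounds of density and temperature}), the dissipation $\int_0^T\!\!\int\theta^\beta\theta_x^2\,dx\,dt$ dominates the unweighted $\int_0^T\!\!\int\theta_x^2\,dx\,dt$; taking $\delta$ small then closes the pair of inequalities, yielding \eqref{energy estimate of velocity} together with $\sup_t\|\theta-1\|_{L^2}$ and $\int_0^T\!\!\int\theta^\beta\theta_x^2\,dx\,dt$ in one stroke (the $u_t$ bound follows a posteriori from the equation). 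In short, the missing idea in your proposal is precisely the auxiliary $(\theta-1)$-estimate on the temperature equation; without it the degenerate-conductivity coupling cannot be closed.
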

\begin{proof}
Multiplying \eqref{NSFAC-Lagrange}$_2$ by $u_{xx}$ and integrating the resultant over $(-\infty,+\infty)\times(0,T)$, by using \eqref{bound of density}, \eqref{bound of phi and v}, \eqref{bound of the square modulus of the first derivative},  \eqref{higher derivative energy estimation for phi}, \eqref{the upper and lower bounds of the derivative for phi},  we obtain
\begin{equation}\label{u-x and u-xx 1}
\left.\begin{array}{llll}
% \nonumber to remove numbering (before each equation)
\displaystyle\frac{1}{2}\int_{-\infty}^{+\infty}u_x^2dx+\int_0^T\int_{-\infty}^{+\infty}\frac{u_{xx}^2}{v}dxdt \\
 \displaystyle\leq C+\frac{1}{2}\int_0^T\int_{-\infty}^{+\infty}\frac{u_{xx}^2}{v}dxdt\\
  \displaystyle\qquad+C\int_0^T\int_{-\infty}^{+\infty}\Big(\theta_x^2+\theta^2 v_x^2+\big|\frac{\phi_x}{v}\big|^2\big|\big(\frac{\phi_x}{v}\big)_x\big|^2+u^2_xv_x^2\Big)dxdt\\
 \displaystyle\leq C+\frac{1}{2}\int_0^T\int_{-\infty}^{+\infty}\frac{u_{xx}^2}{v}dxdt+C\int_0^T\int_{-\infty}^{+\infty}\theta_x^2dxdt+C\int_0^T\sup_{x\in\mathbb{R}}\theta^2
 \int_{-\infty}^{+\infty}v_x^2dxdt\\
\displaystyle\ \ +C\sup_{(x,t)\in\mathbb{R}\times[0,T]}\big|\frac{\phi_x}{v}\big|^2\int_0^T\int_{-\infty}^{+\infty}\big|\big(\frac{\phi_x}{v}\big)_x\big|^2dxdt
+C\int_0^T\sup_{x\in\mathbb{R}}u_x^2\int_{-\infty}^{+\infty}v_x^2dxdt\\
 \displaystyle\leq C+\frac{3}{4}\int_0^T\int_{-\infty}^{+\infty}\frac{u_{xx}^2}{v}dxdt
 +C_1\int_0^T\int_{-\infty}^{+\infty}\frac{\theta^\beta\theta_x^2}{v}dxdt,
\end{array}
\right.
\end{equation}
where the following inequality  are used
\begin{eqnarray}\label{u-x L2}
  \int_0^T\sup_{x\in\mathbb{R}} u_x^2dt&\leq& C(\delta)\int_0^T\int_{-\infty}^{+\infty}u_x^2dxdt+\delta\int_0^T\int_{-\infty}^{+\infty}\frac{u_{xx}^2}{v}dxdt\notag\\
  &\leq& C(\delta)+\delta\int_0^T\int_{-\infty}^{+\infty}\frac{u_{xx}^2}{v}dxdt.
\end{eqnarray}
and
\begin{equation}\label{theta-x L2}
\left.\begin{array}{llll}
\displaystyle \sup_{\mathbb{R}} (\theta-2)_+^2&=\displaystyle \sup_{\mathbb{R}}\Big(\int_x^{+\infty}\partial_y(\theta-2)_+(y,t)dy\Big)^2\\
\displaystyle  &\displaystyle\leq \Big(\int_{(\theta>2\bar\theta)(t)}\big|\theta_y\big|dy\Big)^2\\
 \displaystyle &\displaystyle\leq C\int_{-\infty}^{+\infty}\theta_x^2dx.
 \end{array}\right.
\end{equation}
Multiplying \eqref{NSFAC-Lagrange}$_3$ by $(\theta-2)_+$, integrating the resultant over $\mathbb{R}\times(0,T)$ by parts, combining with \eqref{basic energy inequality}, \eqref{integral sup theta} and \eqref{u-x L2}, one has
\begin{align}\label{theta theta-x 1}
% \nonumber to remove numbering (before each equation)
 & \frac{1}{2}\int_{-\infty}^{+\infty}(\theta-2)_+^2dx+\int_0^T\int_{(\theta>2)(t)}\frac{\theta^\beta\theta_x^2}{v}dxdt \notag \\
 & \leq C\int_0^T\int_{-\infty}^{+\infty}\theta(\theta-2)_+|u_x|dx+C\int_0^T\int_{-\infty}^{+\infty}\big(u_x^2+\mu^2\big)(\theta-2\bar\theta)_+ dx+C\\
 &\leq C\int_0^T\sup_{\mathbb{R}}\theta\Big(\int_{-\infty}^{+\infty}(\theta-2)_+^2dx+\int_{-\infty}^{+\infty}u_x^2 dx\Big)+C.\notag
 \end{align}
Since
\begin{equation}
\left.\begin{array}{llll}
\displaystyle  \int_0^T\int_{-\infty}^{+\infty}\theta^\beta\theta_x^2dxdt&=&\displaystyle\int_0^T\int_{(\theta>2)(t)}\theta^\beta\theta_x^2dxdt
  +\int_0^T\int_{(\theta\leq2\bar\theta)(t)}\theta^\beta\theta_x^2dxdt \\
   &\leq& \displaystyle C \int_0^T\int_{(\theta>2)(t)}\frac{\theta^\beta\theta_x^2}{v}dxdt
  +C\int_0^T\int_{(\theta\leq2)(t)}\frac{\theta^\beta\theta_x^2}{v\theta^2}dxdt \\
   &\leq&\displaystyle C \int_0^T\int_{(\theta>2)(t)}\frac{\theta^\beta\theta_x^2}{v}dxdt+C,
\end{array}
\right.
\end{equation}
together  \eqref{u-x and u-xx 1} and \eqref{theta theta-x 1}, combining with Gronwall's inequality, one obtains
\begin{equation}\label{tempeture and velocity-1}
  \sup_{t\in[0,T]}\int_{-\infty}^{+\infty}\big((\theta-2)_+^2+u_x^2\big)dx+\int_0^T\int_{-\infty}^{+\infty}\big(\theta^\beta\theta_x^2+ u_{xx}^2\big)dxdt\leq C.
\end{equation}
Rewriting \eqref{NSFAC-Lagrange}$_2$ as
\begin{equation}\label{momentum equation}
  u_t=-\big(\frac{\theta}{v}\big)_x+\frac{u_{xx}}{v}-\frac{u_x v_x}{v^2}-\epsilon\frac{\phi_x}{v}\big(\frac{\phi_x}{v}\big)_x,
\end{equation}
from \eqref{bound of the square modulus of the first derivative}, \eqref{tempeture and velocity-1}, \eqref{the upper and lower bounds of the derivative for phi}, \eqref{u-x L2}, \eqref{bound of the square modulus of the third derivative for phi} and \eqref{tempeture and velocity-1}, we get
\begin{eqnarray}\label{sup u-t L2}
 \int_0^T\int_{-\infty}^{+\infty}u_t^2dxdt\leq C\int_0^T\int_{-\infty}^{+\infty}\Big(u_{xx}^2+u_x^2v_x^2+\theta_x^2+\theta^2v_x^2+\big|\frac{\phi_x}{v}\big|^2\big|\big(\frac{\phi_x}{v}\big)_x\big|^2\Big)dxdt\leq C.
\end{eqnarray}
Together with \eqref{tempeture and velocity-1}, the energy inequality \eqref{energy estimate of velocity} is achieved. The proof of Lemma \ref{velocity-x} is completed.
\end{proof}

\begin{lemma}\label{theta-x}
Let $(v,u,\theta,\phi)$ be a smooth solution of \eqref{NSFAC-Lagrange}-\eqref{initial condition} on $(-\infty,+\infty)\times [0,T]$, then for $\forall(x,t)\in(-\infty,+\infty)\times [0,T]$, the following inequality holds
 \begin{equation}\label{energy estimate of tempeture}
\sup_{0\leq t\leq T}\int_{-\infty}^{+\infty}\theta_{x}^2dx+\int_0^T\int_{-\infty}^{+\infty}\big(\theta_{t}^2+\theta^2_{xx}\big)dxdt\leq C.
 \end{equation}
\end{lemma}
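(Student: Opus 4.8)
The plan is to run an $H^1$ estimate on the temperature, using $\mathcal K(\theta)_t$ as the multiplier in the energy equation $\eqref{NSFAC-Lagrange}_5$, where $\mathcal K(\theta)\overset{\mathrm{def}}{=}\int_1^\theta\tau^\beta d\tau$ so that $\mathcal K(\theta)_x=\theta^\beta\theta_x$. Multiplying $\eqref{NSFAC-Lagrange}_5$ by $\theta^\beta\theta_t$ and integrating over $\mathbb R$, I would integrate the heat-flux term by parts; the two contributions proportional to $\theta^{2\beta-1}\theta_x^2\theta_t$ cancel, and one is left with
\begin{equation*}
\frac{d}{dt}\int_{-\infty}^{+\infty}\frac{\theta^{2\beta}\theta_x^2}{2v}dx+\int_{-\infty}^{+\infty}\theta^\beta\theta_t^2dx=-\frac12\int_{-\infty}^{+\infty}\frac{u_x\theta^{2\beta}\theta_x^2}{v^2}dx-\int_{-\infty}^{+\infty}\frac{\theta^{\beta+1}u_x\theta_t}{v}dx+\int_{-\infty}^{+\infty}\Big(\frac{u_x^2}{v}+v\mu^2\Big)\theta^\beta\theta_tdx.
\end{equation*}
Since $v$ and $\theta$ are bounded above and below by Lemmas \ref{the lower bounds of density and temperature}, \ref{the upper bounds of density 1} and \ref{the square modulus estimate for density}, the first term on the right is $\leq C\|u_x\|_{L^\infty}\int\theta^{2\beta}\theta_x^2dx$, while the other two are absorbed into $\frac12\int\theta^\beta\theta_t^2dx$ at the price of $C\int(u_x^2+u_x^4+\mu^4)dx$.

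Next I would assemble the time-integrated control of the source terms. By Lemma \ref{velocity-x}, $\sup_{[0,T]}\|u_x\|_{L^2}^2\leq C$ and $\int_0^T\|u_{xx}\|_{L^2}^2dt\leq C$, so \eqref{u-x L2} gives $\int_0^T\|u_x\|_{L^\infty}^2dt\leq C$; in particular $\|u_x\|_{L^\infty}\in L^1(0,T)$ and $\int_0^T\int u_x^4dxdt\leq C\int_0^T\|u_x\|_{L^\infty}^2\|u_x\|_{L^2}^2dt\leq C$. For $\mu$, equation $\eqref{NSFAC-Lagrange}_4$ reads $\mu=\frac1\epsilon(\phi^3-\phi)-\epsilon\big(\frac{\phi_x}{v}\big)_x$; using $|\phi|\leq C$ together with \eqref{basic energy inequality} and Lemma \ref{phi-xxx} one gets $\sup_{[0,T]}\|\mu\|_{L^2}^2\leq C$, and from $\|\mu\|_{L^\infty}^2\leq C+C\big\|\big(\frac{\phi_x}{v}\big)_x\big\|_{L^2}\big\|\big(\frac{\phi_x}{v}\big)_{xx}\big\|_{L^2}$ and \eqref{bound of the square modulus of the third derivative for phi} one gets $\int_0^T\|\mu\|_{L^\infty}^2dt\leq C$, hence $\int_0^T\int\mu^4dxdt\leq C$; finally $\int_0^T\int u_x^2dxdt\leq C$ by \eqref{First derivative squared modulus}. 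Noting also that $\theta_0^\beta\theta_{0x}\in L^2(\mathbb R)$ (from $\theta_0-1\in H^1$ in \eqref{condition 1} and \eqref{condition 2}), Gronwall's inequality applied to the identity above yields
\begin{equation*}
\sup_{0\leq t\leq T}\int_{-\infty}^{+\infty}\theta_x^2dx+\int_0^T\int_{-\infty}^{+\infty}\theta_t^2dxdt\leq C,
\end{equation*}
where at the last step I use $v,\theta\sim 1$ to pass between $\theta^{2\beta}\theta_x^2,\theta^\beta\theta_t^2$ and $\theta_x^2,\theta_t^2$.

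Finally, to recover $\theta_{xx}$ I would isolate the principal part of $\eqref{NSFAC-Lagrange}_5$, writing $\frac{\theta^\beta}{v}\theta_{xx}=\theta_t+\frac{\theta}{v}u_x-\frac{u_x^2}{v}-v\mu^2+\frac{\theta^\beta\theta_xv_x}{v^2}-\frac{\beta\theta^{\beta-1}\theta_x^2}{v}$, then square and integrate over $\mathbb R\times[0,T]$. By the estimates above together with \eqref{bound of the square modulus of the first derivative}, every term except those containing $\theta_x^4$ and $\theta_x^2v_x^2$ is already bounded; for the last two, the one-dimensional interpolation $\|\theta_x\|_{L^\infty}^2\leq 2\|\theta_x\|_{L^2}\|\theta_{xx}\|_{L^2}\leq C\|\theta_{xx}\|_{L^2}$ gives $\int_0^T\int(\theta_x^4+\theta_x^2v_x^2)dxdt\leq C\int_0^T\|\theta_{xx}\|_{L^2}dt\leq\frac12\int_0^T\int\theta_{xx}^2dxdt+C$, which is absorbed. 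The step I expect to be the main obstacle is the uniform control of the quartic nonlinearities $u_x^4,\mu^4,\theta_x^4$: this is exactly where the previously established $L^2(0,T;L^2)$ bounds on $u_{xx}$ and $\big(\frac{\phi_x}{v}\big)_{xx}$ (Lemmas \ref{velocity-x} and \ref{phi-xxx}) and the $L^2(0,T;L^\infty)$ bound \eqref{u-x L2} on $u_x$ are indispensable, and without them the Gronwall argument would not close.
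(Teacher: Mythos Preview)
Your argument contains a circularity: you invoke a pointwise upper bound $\theta\leq C$ at the very first step (``Since $v$ and $\theta$ are bounded above and below by Lemmas \ref{the lower bounds of density and temperature}, \ref{the upper bounds of density 1} and \ref{the square modulus estimate for density}\dots''), but none of those lemmas gives $\theta\leq C$. Lemma \ref{the lower bounds of density and temperature} yields only $\theta\geq C^{-1}$; Lemma \ref{the upper bounds of density 1} bounds $v$ and $|\phi|$; Lemma \ref{the square modulus estimate for density} bounds $\|v_x\|_{L^2}$. Prior to the present lemma the only control on large $\theta$ is the time-integrated bounds $\int_0^T\sup_x(\theta-1)^2dt\leq C$ and $\sup_t\|\theta-1\|_{L^2}\leq C$ from \eqref{max theta^2}--\eqref{tempeture and velocity-1}, which do \emph{not} give a uniform-in-$t$ pointwise bound. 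In the paper the inequality $\sup_x\theta\leq C$ is in fact a \emph{consequence} of the estimate you are trying to prove (see \eqref{the upper bound estimate for temperature}), so assuming it from the outset is not admissible.

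Concretely, your absorption ``the other two are absorbed into $\frac12\int\theta^\beta\theta_t^2dx$ at the price of $C\int(u_x^2+u_x^4+\mu^4)dx$'' fails without $\theta\leq C$: after Cauchy--Schwarz one is left with $C\int\theta^{\beta+2}u_x^2dx+C\int\theta^\beta(u_x^4+\mu^4)dx$, and the factors $\theta^{\beta+2},\theta^\beta$ cannot be dropped. The paper's proof handles exactly this point: it keeps the $\theta$-weights, bounds those terms by $C\sup_x\big((\theta-1)^{2\beta+2}+u_x^4+\mu^4\big)+C$, and then uses the Sobolev-type inequality
\[
\sup_{x\in\mathbb{R}}(\theta-1)^{2\beta+2}\leq C(\delta)+C\delta\int_{-\infty}^{+\infty}(\theta^\beta\theta_x)^2dx
\]
(from \eqref{the upper bound estimate for theta}, exploiting $\sup_t\|\theta-1\|_{L^2}\leq C$) so that the dangerous $\theta$-power is absorbed into the quantity being Gronwalled. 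Only after closing the Gronwall loop for $\int(\theta^\beta\theta_x)^2dx$ does one deduce $\theta\leq C$, and then convert to $\theta_x^2,\theta_t^2$ as you do. Your identity, your treatment of $u_x^4$ and $\mu^4$, and your $\theta_{xx}$ step are otherwise correct; the missing ingredient is precisely this decoupling of the large-$\theta$ contribution.
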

\begin{proof}
Multiplying \eqref{NSFAC-Lagrange}$_5$ by $\theta^\beta\theta_t$ and integrating the resultant over $(0,1)$, by using \eqref{bound of density}, %\eqref{bound of phibound of phi}, 
\eqref{tempeture and velocity-1}, we have
\begin{align}\label{theta-t theta-xx}
% \nonumber to remove numbering (before each equation)
 &\frac{1}{2}\frac{d}{dt}\Big(\int_{-\infty}^{+\infty}\frac{(\theta^\beta\theta_x)^2}{v}dx\Big)+\int_{-\infty}^{+\infty}\theta^\beta\theta_t^2dx\notag \\
  & =-\frac{1}{2}\int_{-\infty}^{+\infty}\frac{(\theta^\beta\theta_x)^2u_x}{v^2}dx+\int_{-\infty}^{+\infty}\frac{\theta^\beta\theta_t\big(-\theta u_x+u_x^2+v^2\mu^2\big)}{v}dx\\
  &\leq C\sup_{x\in\mathbb{R}}|u_x|\theta^{\frac{\beta}{2}}\int_{-\infty}^{+\infty}\theta^{\frac{3\beta}{2}}\theta_x^2dx+\frac{1}{2}\int_{-\infty}^{+\infty}\theta^\beta\theta_t^2dx+C\int_{-\infty}^{+\infty}\theta^{\beta+2}u_x^2dx+C\int_{-\infty}^{+\infty}\theta^\beta\big(u_x^4+\mu^4\big)dx\notag\\
  &\leq C\int_{-\infty}^{+\infty}\theta^{\beta}\theta_x^2dx\int_{-\infty}^{+\infty}(\theta^{\beta}\theta_x)^2dx+\frac{1}{2}\int_{-\infty}^{+\infty}\theta^{\beta}\theta_t^2dx+C\sup_{x\in\mathbb{R}}\big((\theta-1)^{2\beta+2}+u_x^4+\mu^4\big)+C.\notag
\end{align}
Now we deal with the term $\sup_{x\in\mathbb{R}}\big((\theta-1)^{2\beta+2}+u_x^4+\mu^4\big)$ in the last inequality of \eqref{theta-t theta-xx}.
Applying Lemma \ref{velocity-x}, direct computation shows that
\begin{eqnarray}
  \int_0^T\sup_{x\in\mathbb{R}}u_x^4dt  &\leq& C\int_0^T\int_{-\infty}^{+\infty}|u_x^3u_{xx}|dxdt\notag\\
  &\leq&C\int_0^T\sup_{x\in\mathbb{R}}u_x^2\Big(\int_{-\infty}^{+\infty}u_x^2dx\Big)^\frac{1}{2}\Big(\int_{-\infty}^{+\infty}u_{xx}^2dx\Big)^\frac{1}{2}dt\\
   &\leq&\frac12\int_0^T\sup_{x\in\mathbb{R}}u_x^4dt+C\int_0^T\int_{-\infty}^{+\infty}\big(u_x^2+u_{xx}^2)dxdt\notag\\
      &\leq&\frac12\int_0^T\sup_{x\in\mathbb{R}}u_x^4dt+C,\notag
\end{eqnarray}
then
\begin{equation}\label{u-x^4}
 \int_0^T\sup_{x\in\mathbb{R}}u_x^4dt\leq C.
\end{equation}
Combining with \eqref{bound of the square modulus of the third derivative for phi}, by the same way above, we  have
\begin{equation}\label{mu^4}
 \int_0^T\sup_{x\in\mathbb{R}}\mu^4dt\leq C.
\end{equation}
Moreover, by using Sobolev embedding theorem and \eqref{tempeture and velocity-1}, we get
\begin{align}\label{the upper bound estimate for theta}
  \sup_{x\in\mathbb{R}}(\theta-1)^{2\beta+2}
  &\leq C(\delta)\int_{-\infty}^{+\infty}(\theta-1)^{2\beta+2}dx+\delta\int_{-\infty}^{+\infty}(\theta-1)^{2\beta}\theta_x^2dx\notag\\
  &\leq \frac{1}{2} \sup_{x\in\mathbb{R}}(\theta-1)^{2\beta+2}+C(\delta)+C\delta\int_{-\infty}^{+\infty}(\theta^\beta\theta_x)^2dx.
\end{align}
Substituting \eqref{u-x^4}, \eqref{mu^4}, \eqref{the upper bound estimate for theta} into \eqref{theta-t theta-xx}, by using Gronwall's inequality, we obtain
\begin{equation}\label{derivative estimation for theta}
\sup_{0\leq t\leq T}\int_{-\infty}^{+\infty}(\theta^\beta\theta_x)^2dx+\int_0^T\int_{-\infty}^{+\infty}\theta^\beta\theta_t^2dxdt\leq C.
\end{equation}
Therefore,  in view of \eqref{the upper bound estimate for theta}, we have
\begin{equation}\label{the upper bound estimate for temperature}
 \sup_{(x,t)\in(-\infty,+\infty)\times[0,T]}\theta\leq C.
\end{equation}
Thus, both \eqref{the upper bound estimate for theta} and \eqref{derivative estimation for theta} lead to
\begin{align}
\displaystyle  \sup_{0 \le t\le T}\int_{-\infty}^{+\infty}\theta_{x}^2 dx+\int_0^T\int_{-\infty}^{+\infty} \theta_t^2dxdt\le C.
\end{align}

From \eqref{NSFAC-Lagrange}$_5$ again, we also have
\begin{equation}\label{energy conservation equation theta-xx}
\frac{\theta^\beta\theta_{xx}}{v}=\theta_t- \frac{\beta\theta^{\beta-1}\theta_x^2}{v}+\frac{\theta^\beta\theta_x v_x}{v^2}+\frac{\theta u_x}{v}-\frac{u_x^2+(v\mu)^2}{v},
\end{equation}
which yields that
\begin{equation}\label{the estimate of tempture-xx L2}
\left.\begin{array}{llll}
\displaystyle\int_0^T\int_{-\infty}^{+\infty}\theta_{xx}^2dxdt&\displaystyle\leq C \int_0^T\int_{-\infty}^{+\infty}\big(\theta_x^2v_x^2+\theta_x^4+u_x^4+\mu^4+u_x^2+\theta_t^2\big)dxdt\\
 &\leq\displaystyle C(\delta)+C\delta\int_0^T\sup_{x\in\mathbb{R}}\theta_x^2dt\\
  &\leq\displaystyle C(\delta)+C\delta\int_0^T\int_{-\infty}^{+\infty}\theta_{xx}^2dxdt.
\end{array}
\right.
\end{equation}
Furthermore,  by using maximum principle, we obtain $-1\leq\phi\leq1$.
The proof of Lemma \ref{theta-x} is completed.
\end{proof}

So far, from the a priori estimates of solutions (see Lemma \ref{Local existence}-Lemma \ref{theta-x}), Theorem \ref{thm-global} can be obtained by extending the local solutions globally in time. For details, please refer to  \cite{DLL2013}, \cite{HSS2020}, %\cite{CHHS2020}
 and the references therein.

\

\noindent{\bf Acknowledgments.} The authors would like to thank the anonymous referees for their careful comments and suggestions leading to  improvements in the paper.

\end{document}